\numberwithin{equation}{section}
\def\o{\prec}
\def\oo{\succ}
\def\S{\underline{S}}
\def\s{\underline{s}}
\def\s{\underline{s}}
\def\loc{\mathrm{loc}}
\def\R{\mathbb R}
\def\C{\mathbb C}
\def\N{\mathbb N}
\def\Z{\mathbb Z}
\def\re{\operatorname{Re}}
\def\im{\operatorname{Im}}
\def\area{\operatorname{area}}
\def\diam{\operatorname{diam}}
\def\dim{\operatorname{dim}}
\def\essinf{\operatorname*{ess\,inf}}
\def\length{\operatorname{length}}
\newtheorem{theorem}{Theorem}
\newtheorem{proposition}{Proposition}
\newtheorem{lemma}{Lemma}
\newtheorem*{thma}{Theorem A}
\newtheorem*{thmb}{Theorem B}
\newtheorem*{thmc}{Theorem C}
\newtheorem*{thmd}{Theorem D}
\newtheorem*{thme}{Theorem E}
\newtheorem*{thmf}{Theorem F}
\newtheorem*{thmg}{Theorem G}
\begin{document}
\title{Dynamics of a higher dimensional analog of the trigonometric functions}
\author{Walter Bergweiler\thanks{Supported by the EU Research
Training Network CODY, the
ESF Networking Programme HCAA
and the Deutsche Forschungsgemeinschaft, Be 1508/7-1.} 
\ and Alexandre Eremenko\thanks{Supported by NSF grant
DMS--0555279 and by the Humboldt Foundation.}
}
%%%\subjclass{30D05, 30C65, 37F10, 37F35}
%%%\keywords{Dynamics of entire functions, quasiregular maps}
%%%\date{\today}
\date{}
\maketitle
\begin{abstract}
We introduce a quasi\-regular analog
$F$ of the sine and cosine function such that,
for a sufficiently large
constant $\lambda$, the map  $x \mapsto \lambda F(x)$ is
locally expanding.
We show that the dynamics of this map
define a representation
of $\R^d$, $d\geq 2$,
as a union of simple curves $\gamma:[0,\infty)\to
\R^d$
which tend to $\infty$ and
whose interiors $\gamma^*=\gamma((0,\infty))$ are disjoint 
such that the union of all
$\gamma^*$ has Hausdorff dimension~$1$.
\end{abstract}

\section{Introduction and statement of results}\label{intro}
The Julia set $J(f)$ of an entire function $f$
is defined as the set of all points in $\C$
where the iterates $f^{k}$ of $f$ do not form a normal
family. An equivalent definition was given in~\cite{E}:
$J(f)=\partial I(f)$ where 
$I(f)=\{ z:f^n(z)\to\infty\}$
is the set of escaping points; see~\cite{Ber93} for
an introduction to the dynamics of entire an meromorphic
functions.

Devaney and Krych \cite{DK} showed that $J(\lambda e^z)$
is a ``Cantor bouquet'' for $0<\lambda<1/e$.
To give a precise statement of their result we say
that a subset $H$ of $\C$ (or $\R^d$) is a {\em hair}
if there exists a continuous injective map
$\gamma:[0,\infty)\to\C$ (or $\R^d$) such that
$\lim_{t\to\infty}\gamma(t)=\infty$
and $\gamma([0,\infty))=H.$ We call $\gamma(0)$ the
{\em endpoint} of the hair.

The result of Devaney and Krych is the following.
\begin{thma}
Let $0<\lambda< 1/e$. Then
$J(\lambda e^z)$ is an uncountable union
of pairwise disjoint hairs.
\end{thma}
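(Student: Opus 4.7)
The plan is to use symbolic dynamics. Since $f(z)=\lambda e^z$ satisfies $f(z+2\pi i)=f(z)$, partition $\C$ into horizontal strips $S_n=\{z:(2n-1)\pi<\im z\leq(2n+1)\pi\}$; each $S_n$ is mapped bijectively by $f$ onto $\C\setminus(-\infty,0]$, and we let $L_n$ denote the corresponding inverse branch. To each $z$ whose forward orbit avoids the negative real axis we associate an \emph{itinerary} $\s=\s(z)=(s_0,s_1,\ldots)\in\Z^{\N}$ by the rule $f^k(z)\in S_{s_k}$.

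For $0<\lambda<1/e$, the real map $x\mapsto\lambda e^x$ has two positive fixed points $p_1<1<p_2$; the smaller one is attracting and its basin contains $(-\infty,p_2)$, which extends in the complex plane to a left half-plane contained in the Fatou set. Consequently $J(f)\subset\{\re z\geq c\}$ outside a compact set, and on this region $|f'(z)|=\lambda e^{\re z}$ is large; dually $|L_n'(w)|=1/|w|$ is small whenever $|w|$ is large.

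For each \emph{admissible} itinerary $\s$ (meaning roughly that $\log|s_k|$ grows slowly enough to be dominated by the iterated exponential growth of $\re f^k$ along the orbit to be constructed), I would build the hair as a limit of pullbacks of horizontal rays. Concretely, set
\[
\gamma_N(t)=\bigl(L_{s_0}\circ L_{s_1}\circ\cdots\circ L_{s_{N-1}}\bigr)(t+2\pi i s_N),\qquad t\geq 0.
\]
The contraction estimates on the $L_n$, together with bounded-distortion control along compositions, show that $\gamma_N$ converges uniformly on compact subsets of $[0,\infty)$ to a continuous injection $\gamma_{\s}\colon[0,\infty)\to\C$ with $\gamma_{\s}(t)\to\infty$ as $t\to\infty$. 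The endpoint $\gamma_{\s}(0)$ is produced by a compactness argument and is the unique point of the hair whose orbit does not escape to $\infty$.

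Pairwise disjointness follows because the itinerary is a dynamical invariant: distinct $\s$ and $\s'$ yield disjoint hairs. Uncountably many admissible itineraries exist, so the family is uncountable, and one verifies that $J(f)=\bigcup_{\s}\gamma_{\s}([0,\infty))$ by showing that every point of $J(f)$ has a well-defined itinerary, that escaping points in $J(f)$ are parameterized by $t>0$, and that non-escaping points in $J(f)$ are endpoints. The most delicate step is controlling the distortion of the deep compositions $L_{s_0}\circ\cdots\circ L_{s_{N-1}}$ well enough for the limit to be a genuine simple curve rather than a more complicated set, and for the endpoint $\gamma_{\s}(0)$ to exist and depend continuously on $\s$.
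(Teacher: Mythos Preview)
The paper does not prove Theorem~A; it is quoted as a background result of Devaney and Krych~\cite{DK}. There is therefore no proof in the paper to compare your sketch against directly. Your outline is essentially the classical approach: strip decomposition, itineraries, and construction of hairs as limits of pulled-back horizontal rays, which is indeed how the original argument proceeds.

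It is worth noting, however, that for the \emph{analogous} statement in the paper (Proposition~\ref{prop1}, that each itinerary class $H(\s)$ is a hair), the authors use a genuinely different method, following Rottenfu{\ss}er, R\"uckert, Rempe and Schleicher~\cite{R3S}. Rather than building the hair as a limit of curves and controlling distortion of long compositions, they observe that $H(\s)\cup\{\infty\}$ is a nested intersection of compact connected sets, hence a continuum; they then define a strict linear order $\prec$ on it (via eventual domination of $|f^k(\cdot)_d|$), verify that the order topology agrees with the induced topology, and invoke Nadler's lemma (Lemma~\ref{lemma2}) to conclude it is an arc. This sidesteps the ``most delicate step'' you flag (distortion control of deep compositions) entirely, at the cost of a more abstract argument.

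One small correction to your sketch: the endpoint $\gamma_{\s}(0)$ is \emph{not} in general ``the unique point of the hair whose orbit does not escape to~$\infty$.'' Endpoints can escape (indeed, for $\lambda e^z$ most of the escaping set, in the sense of Hausdorff dimension, consists of endpoints by Theorem~C). What distinguishes the endpoint is that it is the minimal element under the natural order on the hair, not that its orbit is bounded.
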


We denote by $\dim X$ the Hausdorff dimension of a set $X$ in
$\C$ (or in~$\R^d$). The following result is due to
McMullen~\cite[Theorem~1.2]{Mc}.
\begin{thmb}
For $\lambda\in\C\backslash\{0\}$ we
have $\dim J(\lambda e^z)=2.$
\end{thmb}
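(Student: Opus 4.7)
The plan is to follow McMullen's density approach: construct a nested Cantor-like subset of $J(\lambda e^z)$ whose relative density in each covering piece at each scale approaches $1$, so that its Hausdorff dimension must equal $2$. Since $f(z)=\lambda e^z$ has only one finite singular value (the omitted value $0$), every branch of $f^{-1}$ extends analytically to any disk disjoint from $0$, and the preimage components are indexed by $2\pi i \Z$, shifted vertically. This complete absence of critical points is what drives the argument.

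First, I would fix a large $R>0$ and consider a square $Q\subset\{\re z>R\}$, small enough that $Q$ avoids $0$. For each $k\in\Z$ with $|k|\le K$ (with $K$ a large parameter to be sent to $\infty$), the inverse branch $g_k$ of $f$ mapping into the horizontal strip $\{|\im z-2\pi k-\arg\lambda|<\pi\}$ takes $Q$ to a set $Q_k$ centered at height $\approx 2\pi k$ and real part $\approx \log(2\pi|k|)$. The point is that if we choose a second, taller rectangle $U\supset Q$ positioned at large real part $x_0$ and of height $\sim 2\pi K$, almost all of $U$ is filled by the union $\bigcup_{|k|\le K} Q_k$ as $K\to\infty$: the missing region is controlled by the measure of the image of $f$ omitted by these branches, which is a set of relatively small area at the relevant scale. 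After rescaling, one obtains that the relative area of $\bigcup g_k(Q)$ in $U$ can be made as close to $1$ as desired by choice of $R,K$.

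Second, I would iterate this construction. At stage $n$, inside each component $Q_{k_1\ldots k_n}=g_{k_1}\circ\cdots\circ g_{k_n}(Q)$, the map $f^n$ is conformal, so by the Koebe distortion theorem the relative density of the next generation's preimages inside $Q_{k_1\ldots k_n}$ matches, up to a universal factor, the density at the previous stage. The intersection $K_\infty=\bigcap_n\bigcup_{k_1,\ldots,k_n} Q_{k_1\ldots k_n}$ consists of points whose orbit under $f$ remains forever in $U$, in particular escapes to $\infty$, hence lies in $J(f)$ (points in the Fatou set whose orbits stay in a fixed compact-in-the-plane-of-non-singular-values region would lie in a wandering or periodic Fatou component, which can be excluded because $f^n$ is expanding on $K_\infty$ by the estimate $|f'(z)|=|\lambda|e^{\re z}>|\lambda|e^R\gg 1$). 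Applying McMullen's criterion (a mass-distribution/density lemma for nested covers in $\R^2$ in which each level retains almost all area of the parent piece) yields $\dim K_\infty=2$, and so $\dim J(f)\ge 2$, completing the proof since trivially $\dim J(f)\le 2$.

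The main obstacle is the density estimate in the first step: one must show quantitatively that, after rescaling the tall rectangle $U$ to a unit square, the pieces $g_k(Q)$ for $|k|\le K$ exhaust a proportion tending to $1$ of the area. This requires tracking how $\log$ distorts areas in an annular region around $0$ in the $w$-plane and matching the geometry of $Q$ to the geometry of $U$ so that the residual set (the gaps between the branches, together with the top and bottom of $U$) has negligible area compared with $\operatorname{area}(U)$ as $K\to\infty$. Once this single estimate is in hand, bounded distortion propagates it through all later generations, and the dimension conclusion is automatic.
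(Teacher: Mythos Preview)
The paper does not prove Theorem~B: it is quoted, with attribution to McMullen~\cite[Theorem~1.2]{Mc}, purely as background for the paper's own results on a quasiregular analogue of the sine function in~$\R^d$. There is therefore no ``paper's own proof'' to compare your proposal against; what you have sketched is essentially a recollection of McMullen's original argument.

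Your outline has the right architecture---nested pullbacks, a density-to-one estimate at the base step, Koebe distortion to propagate it, and a density lemma to conclude dimension~$2$---but the geometry of your base step is garbled. If $Q\subset\{\re z>R\}$, then each branch $g_k(w)=\log(w/\lambda)+2\pi i k$ sends $Q$ to a set whose real part is approximately $\log(R/|\lambda|)$ (it depends on $|w|$, not on $k$; your $\log(2\pi|k|)$ is wrong), so a rectangle $U\supset Q$ sitting at real part $\ge R$ cannot be ``almost filled'' by the $g_k(Q)$, which sit near real part $\log R\ll R$. Correspondingly, an orbit that stays forever in a fixed bounded rectangle $U$ does not escape to~$\infty$, contrary to what you assert. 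In McMullen's actual construction one works with a sequence of boxes marching to the right, each mapped by $f$ over many vertical translates of the next; the density estimate is for $f^{-1}(B_{n+1})\cap B_n$ inside $B_n$, and the resulting Cantor set lies in $I(f)\subset J(f)$ precisely because the real parts of the orbit tend to~$\infty$. With the setup straightened out in this way, the remainder of your plan is exactly McMullen's proof.
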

Karpi\'nska~\cite[Theorem~1.1]{K} proved the following surprising result.
\begin{thmc}
Let $0<\lambda<1/e$ and let $E_\lambda$
be the set of endpoints of the hairs
that form $J(\lambda e^z)$.
Then $\dim E_\lambda=2$ and $\dim(J(\lambda e^z)\backslash
E_\lambda)=1$.
\end{thmc}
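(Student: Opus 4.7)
The plan is to split the theorem into its two halves; the non-trivial inequalities are $\dim(J(\lambda e^z)\setminus E_\lambda)\leq 1$ and $\dim E_\lambda\geq 2$, while the reverse inequalities are immediate (each hair is a homeomorphic image of $[0,\infty)$, and $E_\lambda\subset\C$). The common tool is the Devaney--Krych parametrization from Theorem~A: each hair is $\gamma_{\s}\colon[t_{\s},\infty)\to J(\lambda e^z)$ labelled by an admissible external address $\s=(s_0,s_1,\ldots)$ with $s_j\in\Z$, satisfying $\re\gamma_{\s}(t)\sim t$ as $t\to\infty$ and having endpoint $\gamma_{\s}(t_{\s})$. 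A point $z$ is a non-endpoint precisely when $\re f^{n}(z)$ grows at least at iterated-exponential rate in $n$.

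For the upper bound I would cover $J(\lambda e^z)\setminus E_\lambda$ by pullbacks of near-horizontal arcs. Fix $R$ large and work in strips $S_k=\{w:R+k<\re w<R+k+1,\ |\im w|\leq\pi\}$. For each non-endpoint $z$ let $n=n(z)$ be the least index with $\re f^{n}(z)>R$, and decompose further by the first $n$ address symbols $(s_0,\ldots,s_{n-1})$ and by the slab index $k$ with $f^{n}(z)\in S_k$. Each resulting piece is the $f^{-n}$-pullback along the selected inverse branches of a near-horizontal arc of length at most $1$. Using $|f'(w)|=\lambda e^{\re w}$ together with a Koebe-type distortion estimate on each rectangle, the diameter of such a piece admits a geometric-product bound along $j<n$, and summing $(\diam)^s$ first in $k$ yields the factor $\sum_{k\geq 0}e^{-(s-1)k}<\infty$ for every $s>1$; the sums over $n$ and addresses are absorbed by the per-symbol contraction, giving $\dim(J(\lambda e^z)\setminus E_\lambda)\leq 1$.

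For $\dim E_\lambda\geq 2$ I would follow McMullen's proof of Theorem~B, which builds inside $J(\lambda e^z)$ a Cantor-type set of Hausdorff dimension $\geq 2-\varepsilon$ by selecting, at each level, sub-rectangles of fundamental domains whose $f^{-1}$-pullbacks retain a definite fraction of the parent area. The refinement needed is to confine the limit points to $E_\lambda$: at level $n$, discard from each selected rectangle the slice of points with $\re f^{n}(z)>T_{n}$ for a slowly growing threshold $T_{n}$. Since $\lambda e^z$ sends a fundamental rectangle $\{a<\re w<a+1,\ |\im w-2\pi s|<\pi\}$ to a slit annulus of radial extent $[\lambda e^a,\lambda e^{a+1}]$, the discarded slice has bounded relative area, and choosing $T_{n}\uparrow\infty$ slowly preserves the dimension estimate while enforcing $\re f^{n}(z)\leq T_{n}$ for every surviving $z$. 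By the parametrization, such a $z$ must sit at the endpoint of its hair, so the refined Cantor set lies in $E_\lambda$.

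The main obstacle is precisely this refinement step: the threshold $T_n$ must be calibrated so that the truncated slices carry only a vanishing fraction of each surviving cell, preserving McMullen's dimension-$(2-\varepsilon)$ estimate, while growing fast enough that the truncation does not inadvertently eliminate endpoints themselves. Once this balance is made quantitative---essentially a one-parameter bookkeeping against McMullen's area ratios---the rest of the lower bound is routine, and the upper bound reduces to the telescoping distortion estimate described above.
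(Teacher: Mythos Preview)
Theorem~C is not proved in this paper; it is quoted from Karpi\'nska as background, and only a higher-dimensional analog of the statement $\dim(J\setminus E_\lambda)=1$ receives a proof here (Proposition~3). Two remarks on your proposal are still in order.

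Your route to $\dim E_\lambda\geq 2$ has a genuine gap, and is in any case unnecessary. In McMullen's nested-rectangle construction for $\lambda e^z$ the level-$n$ rectangles sit at real part roughly $a_n$ with $a_{n+1}\sim \lambda e^{a_n}$; this is forced by the dynamics, not a free choice. Every surviving point therefore already has $\re f^n(z)$ growing at iterated-exponential rate, so truncating at a ``slowly growing'' threshold $T_n$ discards either nothing or everything. The endpoint/non-endpoint distinction is not governed by the absolute size of $\re f^n(z)$ but by whether that growth strictly exceeds the minimal rate compatible with the external address; endpoints whose addresses grow fast escape at iterated-exponential speed too. Your ``main obstacle'' is thus structural rather than a calibration issue. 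The clean route is simply to deduce $\dim E_\lambda=2$ from Theorem~B once $\dim(J\setminus E_\lambda)\leq 1$ is established, using $\dim J=\max\{\dim E_\lambda,\dim(J\setminus E_\lambda)\}$.

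Your sketch for $\dim(J\setminus E_\lambda)\leq 1$ is in the right spirit and close both to Karpi\'nska's argument and to the paper's proof of Proposition~3. The paper's packaging differs in detail: it first shows that every non-endpoint eventually enters and remains in a parabola-shaped region $\Omega=\{x:|x_d|\geq M,\ \|p(x)\|\leq \exp\sqrt{\log|x_d|}\,\}$, then covers a ball of radius $\tfrac12|y_d^k|$ about the $k$-th iterate, intersected with~$\Omega$, by at most $|y_d^k|^{1+\varepsilon}$ unit cubes, pulls these back through the inverse branches, and feeds the resulting counts and diameters into a local covering criterion (Lemma~5). Your slab decomposition and geometric sum over $k$ play the role of the $\Omega$-shape, and your Koebe-type distortion corresponds to the paper's pointwise bounds on the derivatives of inverse branches; in substance the two agree.
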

The conclusion of Theorem B holds more generally
for entire functions of finite order
for which the set of critical and asympotic values is 
bounded; see~\cite[Theorem~A]{Bar08} and~\cite{Schubert07}.
If, in addition, this set is
compactly contained in the immediate basin of an attracting fixed 
point, then the conclusions of Theorems A and C also hold~\cite{Bar07,Bar08}.

These results apply in particular to trigonometric functions.
However, the analogue of Theorem A for trigonometric functions
had been obtained already much earlier by
Devaney and Tangerman~\cite{DT}.
\begin{thmd}
Let $0<\lambda< 1$. Then 
$J(\lambda\sin z)$ is an uncountable union of pairwise
disjoint hairs.
\end{thmd}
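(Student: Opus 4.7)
The plan is to follow the Devaney--Krych strategy for $\lambda e^z$, adapted to the sine function. Since $0<\lambda<1$, the fixed point $0$ is attracting with multiplier $\lambda$, and the critical values $\pm\lambda$ of $f(z)=\lambda\sin z$ both lie in its immediate basin; hence the singular set of $f$ is compactly contained in the Fatou set, and $f$ is hyperbolic in the usual transcendental sense. This hyperbolicity is the dynamical input that drives the construction.

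First I would set up a symbolic coding. For $T>0$ large, partition $\{z\in\C:|\im z|>T\}$ into the strips
\[
S_{n,\epsilon}=\{z:(n-\tfrac12)\pi<\re z<(n+\tfrac12)\pi,\ \epsilon\,\im z>T\},\qquad (n,\epsilon)\in\Z\times\{+,-\}.
\]
On each $S_{n,\epsilon}$ the derivative $|f'(z)|=|\lambda\cos z|$ grows exponentially with $|\im z|$, so $f$ is injective there and the inverse branch $L_{n,\epsilon}:=(f|_{S_{n,\epsilon}})^{-1}$ is exponentially contracting on its image. A point whose forward orbit stays in $\{|\im z|>T\}$ acquires an itinerary $\s=(s_0,s_1,\ldots)$ with $s_j\in\Z\times\{+,-\}$, recording which strip $f^j(z)$ lies in. Call $\s$ \emph{admissible} if its index sequence grows slowly enough to be realised, e.g.\ $\log|n_{j+1}|$ bounded by a small constant times the imaginary part visited at step $j$. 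For each admissible $\s$ and each sufficiently large parameter $t\ge 0$, the nested compositions $L_{s_0}\circ L_{s_1}\circ\cdots\circ L_{s_{k-1}}$ applied to a seed with $\im$-part growing with $t$ and the $s_j$ contract, as $k\to\infty$, to a unique limit $h_\s(t)\in J(f)$; varying $t$ traces out the hair $H_\s$.

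It remains to verify the structural claims. Distinct admissible itineraries yield disjoint hairs, since two orbits whose itineraries first differ at step $j$ are separated by a vertical line $\re z=(n+\tfrac12)\pi$ after $j$ iterations. Since the Fatou set of $f$ equals the basin of $0$, a standard expansion argument on the complement of a neighbourhood of this basin shows that every point of $J(f)$ either lies directly on some $H_\s$ or maps onto one under finitely many iterations; in either case it belongs to $\bigcup_\s H_\s$. Finally, the admissible itineraries form an uncountable set, since already the subfamily with all $n_j=0$ is a copy of $\{+,-\}^\N$.

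The main obstacle is showing that $t\mapsto h_\s(t)$ is a \emph{continuous injective curve} with $\lim_{t\to\infty}h_\s(t)=\infty$, rather than, a priori, a Cantor-like collection of orbits sharing the itinerary $\s$. Exponential contraction of the inverse branches pins down the horizontal coordinate of each such orbit, but one must show separately that in the vertical direction the orbits realising $\s$ form a one-parameter continuum parametrised by the escape rate. This is handled by a shadowing argument: any sufficiently rapidly increasing target sequence of imaginary parts can be followed by an orbit with itinerary $\s$, and that orbit depends continuously and monotonically on the target. Combining this monotonicity with the tract geometry (strips stretched across the complement of a large disk) then yields the required continuous, injective parametrisation and the fact that the curve tends to $\infty$.
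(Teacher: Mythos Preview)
The paper does not prove Theorem~D: it is quoted from Devaney and Tangerman~\cite{DT} as background, alongside Theorems~A--G, and no argument for it is given here. So there is no ``paper's own proof'' to compare your proposal against.

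That said, your outline is a reasonable sketch of the classical tract-and-itinerary construction, and you correctly identify the continuity and injectivity of $t\mapsto h_{\s}(t)$ as the crux. One genuine gap: your hairs are built from orbits that remain in $\{|\im z|>T\}$ for all time, and you then assert that any $z\in J(f)$ ``maps onto one under finitely many iterations''. But repelling periodic points of $\lambda\sin z$ can have orbits that stay in a bounded region forever, so they never enter your tracts; such points must arise as \emph{endpoints} obtained by pulling the constructed tails back under inverse branches and taking a limit, not by forward iteration into a tract. Your sketch does not explain how the endpoints are produced or why every Julia point is reached this way, and the phrase ``in either case it belongs to $\bigcup_\s H_\s$'' elides exactly this step.

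It may interest you that for the analogous statement the paper \emph{does} prove (Proposition~\ref{prop1}, that each itinerary class $H(\s)$ for the quasiregular map is a hair), the authors use a different device from your shadowing argument: they define a strict linear order $\prec$ on $H(\s)\cup\{\infty\}$ via the growth of the $d$-th coordinate along the orbit, show the order topology coincides with the induced topology, and then invoke a general continuum-theory lemma (Lemma~\ref{lemma2}) to conclude the set is an arc. This avoids constructing an explicit parametrisation by escape rate and handles the endpoint automatically as the $\prec$-minimal element.
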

McMullen~\cite[Theorem~1.1]{Mc} and Karpi\'nska~\cite[Theorem~3]{Karp1} 
also considered the case of
trigonometric functions. Their results are as follows.
Here $\area X$ stands for the Lebesgue measure of a measurable subset 
$X$ of~$\C$.
\begin{thme}
Let $\lambda,\mu\in\C,\;\lambda\neq 0$. Then $\area J(\lambda\sin z+\mu)>0$.
\end{thme}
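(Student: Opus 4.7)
I would combine the Eremenko--Lyubich inclusion $I(f)\subset J(f)$ with a density/pull-back argument producing a subset of $I(f)$ of positive area. Since the only critical values of $\sin$ are $\pm 1$ and there are no finite asymptotic values, the singular set of $f(z)=\lambda\sin z+\mu$ is the bounded set $\{\mu-\lambda,\mu+\lambda\}$, so by the theorem of Eremenko and Lyubich one has $I(f)\subset J(f)$. It therefore suffices to prove $\area I(f)>0$.

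For $R>0$ let $T_R=\{z\in\C:|\im z|\ge R\}$. From $|\sin(x+iy)|^2=\sin^2x+\sinh^2y$ and $\im\sin(x+iy)=\cos x\sinh y$ one obtains, for $R$ large in terms of $|\lambda|$ and $|\mu|$ and a fixed $\alpha\in(0,1)$, the estimate $|f(z)|\le|\lambda|e^{|\im z|}$ on $T_R$, together with a subset $B(R)\subset T_R$ of horizontal density $O(e^{-\alpha R})$ in every period rectangle $\{a\le\re z<a+2\pi\}\cap T_R$, off which $|\im f(z)|\ge e^{(1-\alpha)R}$. The set $B(R)$ is essentially a thin neighborhood of the vertical lines on which $\im(\lambda\sin z)$ vanishes.

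Now choose $R_n$ growing rapidly enough that $\sum_n e^{-\alpha R_n}<\tfrac12$, fix a period rectangle $S\subset T_{R_0}$, and define $G_n\subset S$ to consist of those $z$ with $f^k(z)\in T_{R_k}\setminus B(R_k)$ for $1\le k\le n$. To pass from $G_n$ to $G_{n+1}$ I would pull back the density estimate for $B(R_n)$ through the appropriate branch of $f^{-n}$. On $T_R$ the function $f$ is a small perturbation of $z\mapsto(i\lambda/2)e^{-iz}$ (with error $O(e^{-|\im z|})$), so Koebe's theorem applied to large disks around each preimage gives a distortion constant for $(f^n)^{-1}$ that is bounded independently of $n$. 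This yields
\[
\area(G_n\setminus G_{n+1})\le C\,e^{-\alpha R_n}\,\area(S),
\]
and summing produces $\area\bigcap_n G_n\ge\tfrac12\area(S)>0$. Every point $z$ of this intersection has $|\im f^n(z)|\to\infty$, so it belongs to $I(f)\subset J(f)$, and Theorem E follows.

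The principal obstacle is the uniform distortion bound for the inverse branches $(f^n)^{-1}$ along orbits that remain in $T_{R_n}\setminus B(R_n)$. One must verify that each relevant inverse branch admits a univalent extension to a disk of radius comparable to the diameter of the period rectangle containing its image, so that Koebe's quarter theorem delivers a distortion constant independent of $n$. Once that is secured, the Borel--Cantelli-type summation above runs cleanly, and the only remaining point is to note that the vertical unboundedness of the orbits already forces $f^n(z)\to\infty$ in $\C$, giving membership in $I(f)$.
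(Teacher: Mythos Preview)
The paper does not give its own proof of Theorem~E: it is quoted as McMullen's result~\cite[Theorem~1.1]{Mc}, and in Section~\ref{quasi} the authors remark that ``McMullen actually proved that the conclusion of Theorems~B and~E holds with $J(f)$ replaced by $I(f)$.'' So there is nothing in the paper to compare your argument against beyond that attribution.

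What you have written is a faithful outline of McMullen's original proof: reduce to $\area I(f)>0$ via the Eremenko--Lyubich inclusion (this is exactly how the paper packages it, citing~\cite{Eremenko92}), then run a nested density argument on period rectangles in $\{|\im z|\ge R\}$, using that the ``bad'' set where $|\im f|$ fails to grow has horizontal density $O(e^{-\alpha R})$ and that bounded distortion of the inverse branches lets one pull this density back through $f^n$. The structure is sound, and you have correctly isolated the one nontrivial step, namely the uniform Koebe-type distortion bound along orbits staying in the good regions. McMullen handles this by observing that the inverse branches of $f$ extend univalently to disks whose radii are a fixed fraction of the imaginary part, which is precisely the observation you sketch via the approximation $f(z)\approx (i\lambda/2)e^{\mp iz}$ on the two half-planes (note the sign depends on $\operatorname{sgn}(\im z)$). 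With that in hand, the Borel--Cantelli summation goes through as you indicate.
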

\begin{thmf}
For $0<\lambda<1$ let $E_\lambda$
be the set of endpoints of hairs that form
$J(\lambda\sin z)$.
Then 
$\area E_\lambda>0$.
\end{thmf}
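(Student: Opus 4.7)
The plan is to combine Theorem~E with an analog of Karpi\'nska's dimension statement from Theorem~C: we show that $\area\bigl(J(\lambda\sin z)\backslash E_\lambda\bigr)=0$. Since Theorem~E gives $\area J(\lambda\sin z)>0$, this immediately yields $\area E_\lambda>0$.

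\textbf{Symbolic framework and hyperbolicity.} For $f(z)=\lambda\sin z$ with $0<\lambda<1$, the origin is an attracting fixed point with multiplier $\lambda$, and both critical values $\pm\lambda$ lie in its immediate basin (indeed, the real orbit $\lambda,\lambda\sin\lambda,\dots$ decreases monotonically to~$0$). Consequently the postcritical set is compactly contained in the Fatou set and $f$ is uniformly hyperbolic on $J(f)$. Moreover $|f'(z)|=\lambda|\cos z|$ grows like $\tfrac{\lambda}{2}e^{|\im z|}$, so the expansion is quantitative far from the real axis. The hair decomposition of Theorem~D is produced by coding each orbit with its itinerary $\underline s=(s_0,s_1,\ldots)\in\mathbb Z^{\mathbb N}$ relative to a partition of $J(f)$ into fundamental tracts $\{T_k\}_{k\in\mathbb Z}$; the hair $\gamma_{\underline s}$ consists of all $z\in J(f)$ with itinerary $\underline s$, and its endpoint $\gamma_{\underline s}(0)$ is the slowest-escaping point of $\gamma_{\underline s}$.

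\textbf{Area estimate for the non-endpoint part.} Following Karpi\'nska's argument for the exponential in~\cite{Karp1}, one verifies that $z\in\gamma_{\underline s}\backslash\{\gamma_{\underline s}(0)\}$ precisely when $|\im f^n(z)|$ eventually exceeds, by a margin that grows super-exponentially in $n$, the minimum modulus of $\im w$ on the tract $T_{s_n}$ occupied by $f^n(z)$. To bound the area of this ``fast-escape'' set, pull each level-$n$ fast-escape region back by the corresponding univalent inverse branch of $f^n$; the compact containment of the postcritical set in the basin of $0$ ensures that these branches extend univalently to a definite enlargement of every tract, so the Koebe distortion theorem controls $|(f^n)'|$ on the pull-back by a uniform constant. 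Since each pull-back has area of order $|(f^n)'|^{-2}\cdot\area\{\text{fast part of }T_{s_n}\}$, summing over admissible itineraries of length $n$ and applying a Borel--Cantelli argument yields $\area\bigl(J(f)\backslash E_\lambda\bigr)=0$.

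\textbf{Main obstacle.} The crux of the proof is the uniform Koebe distortion control for inverse branches of $f^n$ as $n\to\infty$. Two features of $\lambda\sin z$ require additional care compared with the exponential: the two critical values at $\pm\lambda$ produce symmetric families of tracts and double the combinatorial bookkeeping, and the geometry of the sine tracts is slightly more intricate than that of the exponential half-strips, so one must verify that Koebe neighborhoods of uniform modulus exist around every tract in the construction. Both issues are resolved by exploiting the compact containment of the postcritical set, after which the area estimate runs essentially in parallel with Karpi\'nska's treatment of the exponential.
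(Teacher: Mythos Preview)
The paper does not prove Theorem~F at all: it is quoted as a background result due to Karpi\'nska~\cite{Karp1}, alongside Theorems~A--E and~G, before the paper turns to its own contribution (the higher-dimensional quasiregular map). So there is no ``paper's own proof'' to compare against.

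That said, your strategy is sound and in fact matches the remark the authors make immediately after stating Theorem~F: they observe that the argument in~\cite{K} gives $\dim\bigl(J(\lambda\sin z)\setminus E_\lambda\bigr)=1$, which is strictly stronger than the $\area=0$ statement you aim for. Combining that with Theorem~E yields $\area E_\lambda>0$ exactly as you propose. Your sketch of the dimension/area bound (fast escape of non-endpoints, uniform Koebe distortion via compact containment of the postcritical set, Borel--Cantelli) is the right shape for Karpi\'nska's argument, though what you have written is an outline rather than a proof; the quantitative control of the ``fast-escape'' sets and the summability needed for Borel--Cantelli would have to be written out in full.
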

The argument in~\cite{K} shows that under the hypothesis of 
Theorem~F we also have 
$\dim(J(\lambda \sin z)\backslash E_\lambda)=1$.

The conclusions of Theorems $D$ and $F$, as well as the last remark,
hold more generally
for functions of the form $f(z)=\lambda\sin z+\mu$ if the
parameters are chosen such that the critical values
$\pm\lambda+\mu$ of $f$ are contained in the immediate
basin of the same attracting fixed point.
If this condition on the critical values 
is not satisfied,
then the hairs in the Julia set of $f$ still may exist,
but in general distinct hairs may share their endpoints~\cite{RS}.

If the critical values of $f(z)=\lambda\sin z+\mu$
are strictly preperiodic, then $J(f)=\C$.
Schleicher~(\cite{S}, see also~\cite{S1}) showed that $J(f)$ is still
a union of hairs which are pairwise disjoint except
for their endpoints, and the Hausdorff dimension of
the hairs without their endpoints is~$1$.
Thus he obtained the following result.
\begin{thmg}
There exists a representation of 
the complex plane $\C$
as a union of
hairs with the following properties:
\begin{itemize}
\item the intersection of two hairs is either empty
or consists of the common endpoint;
\item the union of the hairs without their endpoints has
Hausdorff dimension~$1$.
\end{itemize}
\end{thmg}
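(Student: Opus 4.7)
The plan is to realise the required decomposition as the Julia set of a suitable sine map. I would first fix $\lambda,\mu\in\C$ such that the two critical values $\pm\lambda+\mu$ of $f(z)=\lambda\sin z+\mu$ are strictly preperiodic under $f$; such Misiurewicz parameters exist and are in fact dense in an appropriate slice of parameter space. For such $f$ the postsingular set is a finite union of repelling cycles, and a case analysis of Fatou component types (attracting and parabolic basins would attract a singular orbit, Baker domains are absent for functions of finite order, and Siegel discs and Herman rings require the closure of the singular orbits on their boundaries) rules out any Fatou component, so $J(f)=\C$. This step reduces the problem to analysing the structure of $J(f)$.

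Next I would construct the hairs by symbolic dynamics on the logarithmic tracts. The preimage of a sufficiently far right half-plane under $\log f$ consists of countably many vertical strip-like domains indexed by $s\in\Z$, one per fundamental domain of $\sin$. Each orbit in the escaping set $I(f)$ has a well-defined external address $\s=(s_0,s_1,\ldots)\in\Z^{\N}$, and the pull-back argument of Devaney--Krych, adapted to $\sin$ as in~\cite{DT} and~\cite{S,S1}, produces for every external address whose entries grow at most linearly a continuous injective curve $\gamma_\s\colon[0,\infty)\to\C$ escaping to $\infty$. The contraction estimate $|f'(z)|\ge\tfrac12|\lambda|e^{|\im z|}$ on the tracts ensures that hair interiors with distinct external addresses are disjoint, because $f^n$ separates such points exponentially. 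However, two hairs may share a preperiodic endpoint: in the Misiurewicz setting both $\gamma_\s(0)$ and $\gamma_{\s'}(0)$ can land on the same repelling preperiodic point of $f$, and the non-escaping points of $\C$ turn out to be exactly such endpoints.

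The main obstacle is the Hausdorff dimension estimate. The lower bound $\dim\ge 1$ is automatic since every $H_\s\setminus\{\gamma_\s(0)\}$ is an arc. For the upper bound I would follow Karpi\'nska's method from Theorem~C. Each point $z$ on a hair interior satisfies $|\im f^n(z)|\to\infty$, and small discs centred at $f^n(z)$ pull back under $f^n$ to discs of diameter comparable to $|\lambda|^{-n}\prod_{k<n}e^{-|\im f^k(z)|}$. Summing these diameters over all admissible external addresses $\s$ with $|s_k|\le T$ gives, at each level $k$, a geometric series in $\sum_{|s_k|\le T}e^{-|s_k|}$, and iterating yields a finite $1$-dimensional Hausdorff measure bound on $\bigcup_\s(H_\s\setminus\{\gamma_\s(0)\})$. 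The technical point is that the endpoints themselves generically form a set of positive area (in analogy with Theorem~F), so the covering must carefully exclude them; organising the covering uniformly over all external addresses, while balancing the combinatorial proliferation of tracts against the exponential contraction, is the delicate step, and it is exactly where Karpi\'nska's estimate in the exponential setting succeeds and can be transcribed here.
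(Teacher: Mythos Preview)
Your sketch follows Schleicher's original argument, which is precisely what the paper cites for Theorem~G; the paper does \emph{not} give its own proof of Theorem~G but attributes it to~\cite{S,S1}. In that sense your approach is the ``right'' one and is essentially correct at the level of an outline, though a few details are loose (e.g.\ the absence of Baker domains comes from $f\in\mathcal{B}$ rather than from finite order per se, and the covering argument for the upper dimension bound needs more care than ``transcribing'' Karpi\'nska, as Schleicher's paper shows).

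It is worth noting, however, that the paper implicitly re-proves Theorem~G via a genuinely different route: the case $d=2$ of Theorem~1. Instead of a holomorphic sine with Misiurewicz parameters, the authors build a quasiregular, merely bi-Lipschitz analogue $F$ of $\sin$ and set $f=\lambda F$ with $\lambda$ so large that $\ell(Df)\geq\alpha>1$ everywhere. This global uniform expansion replaces your entire first paragraph: there is no Julia set to identify with $\C$, no Fatou classification, and no need to locate preperiodic critical values. The hair structure (Propositions~1 and~2) then follows from an order argument based on a quantitative escape lemma (Lemma~1), and the dimension bound (Proposition~3) from explicit derivative estimates $\|D\Lambda^r(y)\|\leq 1/(\beta\|y\|)$ on the inverse branches. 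What your approach buys is that the map is genuinely holomorphic and the result sits inside classical complex dynamics; what the paper's approach buys is a self-contained, elementary argument that generalises verbatim to $\R^d$ for all $d\geq 2$, which is the point of the paper.
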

Zorich~\cite{Z} introduced a quasi\-regular analog
$F:\R^3\to\R^3\backslash\{0\}$ of the exponential
function. It was shown in~\cite{B} that the results
about the dynamics of the exponential function quoted above
(Theorems A, B and~C) have analogs in the
context of Zorich maps.

In this paper we introduce a higher dimensional analog of
the trigonometric functions. The dynamics of this map
are then used to extend Theorem G to all dimensions
greater than~$1$.
\begin{theorem}
For each $d\in\N$, $d\geq 2$,
there exists a representation of $\R^d$ as a union
of hairs with the following properties:
\begin{itemize}
\item the intersection of two hairs is either empty
or consists of the common endpoint;
\item the union of the hairs without their endpoints has Hausdorff
dimension~$1$.
\end{itemize}
\end{theorem}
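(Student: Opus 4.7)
The plan is to mimic Schleicher's proof of Theorem~G, replacing the complex sine by the quasiregular map $\lambda F:\R^d\to\R^d$ introduced in the abstract. Because $F$ is a higher dimensional analog of $\sin$ and $\cos$, one expects that $\R^d$ decomposes into a $\Z^{d-1}$-periodic family of ``half--tracts'' $T_{\s}$ ($\s$ a multi-index, including a sign to distinguish the ``upper'' and ``lower'' half-space images, as for $\sin z$), on each of which $\lambda F$ is an injective quasiregular map onto a canonical half-space. For $\lambda$ large the map is locally expanding, so each inverse branch $\varphi_{\s}=(\lambda F|_{T_\s})^{-1}$ is a uniform contraction on sufficiently deep subsets of $T_\s$. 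I would then introduce the symbol space $\Sigma$ of admissible itineraries $\underline{\s}=\s_0\s_1\s_2\ldots$ of such tract labels and, for each $\underline{\s}\in\Sigma$, attempt to realize a hair $\gamma_{\underline{\s}}$ as the set of $x$ whose forward orbit under $\lambda F$ visits $T_{\s_0},T_{\s_1},\ldots$ in the prescribed order.

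The construction of $\gamma_{\underline{\s}}$ proceeds by nested inverse iteration. Fixing a large radius $t_0$ and the translate $R_n$ of the ``tip'' of $T_{\s_n}$ at height $t_0+n$, the uniform expansion estimate forces
\[
\gamma_{\underline{\s}}\;=\;\bigcap_{n\geq 0}\varphi_{\s_0}\circ\varphi_{\s_1}\circ\cdots\circ\varphi_{\s_{n-1}}\bigl(\overline{R_n}\bigr)
\]
to be a continuous simple arc tending to $\infty$, parametrized by the potential $t_0+n$. To cover all of $\R^d$ rather than just a Julia--set analog, I would arrange the construction so that the analog of the singular set of $\lambda F$ is strictly preperiodic, which in Schleicher's complex setting is precisely the condition that forces $J(\lambda\sin z)=\C$; granted that, every orbit is either an endpoint orbit or eventually follows a well-defined itinerary, so every $x\in\R^d$ lies on some $\gamma_{\underline{\s}}$. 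Injectivity of each $\varphi_{\s}$ and a standard pigeon-hole argument on itineraries then give the disjointness of interiors and the fact that two hairs meet only at a common endpoint, both exactly as in \cite{S}.

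For the Hausdorff dimension statement I would adapt Karpi\'nska's argument: the portion of $\gamma_{\underline{\s}}$ above level $t_0+n$ is the image of a set of bounded diameter under a composition of $n$ inverse branches with uniform contraction factor $<1$, so its diameter decays geometrically in $n$. Summing the resulting geometric series over all admissible $\underline{\s}$ with prefix of length $n$ (which is a $\Z^{d-1}$-periodic index set that I re-group into balls) should yield finite $1$-dimensional Hausdorff measure on the union of the tails, and hence $\dim=1$ for the union of all hairs minus their endpoints. The endpoints themselves fill the remainder of $\R^d$, but no lower bound on their dimension is needed for the theorem.

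The main obstacle is obtaining sharp contraction and distortion estimates for compositions $\varphi_{\s_0}\circ\cdots\circ\varphi_{\s_{n-1}}$ when $\lambda F$ is merely \emph{quasiregular}. In the holomorphic setting the corresponding bounds come from the Koebe distortion theorem, which has no exact analog for $d\geq 3$; one must instead invoke H\"older continuity of quasiregular maps, Reshetnyak's theorem and a delicate geometric analysis of the tracts of $F$ to show that inverse branches contract at an almost geometric rate on tract pieces of bounded ``logarithmic'' size. This is the same difficulty encountered in~\cite{B} for the Zorich exponential, and I expect that most of the technical work in extending Theorem~G to $\R^d$ will be concentrated here.
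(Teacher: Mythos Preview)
Your overall architecture---itinerary partition, hairs as nested inverse images, Karpi\'nska-style dimension estimate---matches the paper's, but three of the steps you single out do not work as you describe them.

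First, the preperiodicity condition is a red herring. You do not need to engineer the critical set of $\lambda F$ to be preperiodic in order to get $\R^d=\bigcup_{\underline\s}H(\underline\s)$: every point $x\in\R^d$ lies in some closed tract $T(s_0)$, its image in some $T(s_1)$, and so on, so the itinerary decomposition covers $\R^d$ automatically. What preperiodicity buys Schleicher in the holomorphic case is $J=\C$; here there is no Julia set to worry about, and the theorem allows distinct hairs to share endpoints, which is exactly what happens for points whose orbit hits a tract boundary.

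Second, and more seriously, your dimension argument cannot yield $\dim\leq 1$. Geometric contraction of inverse branches gives diameters decaying like $\rho^n$, but the index set of itineraries of length $n$ is $(\Z^{d-1}\times\{\pm1\})^n$, which is infinite, so ``summing the resulting geometric series over all admissible $\underline\s$'' is not a sum at all; re-grouping into balls does not fix this. The paper's argument is quite different: one first shows (Lemma~\ref{lemma3}) that every non-endpoint $y$ escapes super-exponentially \emph{and} its orbit eventually stays in the thin region $\Omega=\{|x_d|\geq M,\ \|p(x)\|\leq\exp\sqrt{\log|x_d|}\}$. Then, for large $k$, a ball of radius $\tfrac12|y_d^k|$ around $y^k$ meets $\Omega$ in a set that can be covered by only $|y_d^k|^{1+\varepsilon}$ unit cubes, while the explicit two-sided derivative bounds~\eqref{n2},~\eqref{n3} show that the pull-back to level $0$ has diameter $\leq|y_d^k|^{-1}$ and contains a ball of radius $\geq|y_d^k|^{-\varepsilon}$. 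Feeding this into the covering lemma (Lemma~\ref{lemma5}) gives $\dim\leq 1+(d+1)\varepsilon$. The crucial inputs---super-exponential escape and confinement to a region of sub-polynomial cross-section---are entirely absent from your sketch.

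Third, the ``main obstacle'' you identify is not one. The map $F$ is built to be locally bi-Lipschitz, so $\|D\Lambda^r(y)\|$ and $\ell(D\Lambda^r(y))$ are controlled directly by~\eqref{n1}--\eqref{n3}; no Koebe substitute, no Reshetnyak, no H\"older estimates are needed. The paper even remarks that quasiregularity of $F$ is never actually used. The genuine technical work lies in establishing the escape/confinement lemma and in the order-theoretic proof that each $H(\underline\s)$ is an arc (via a linear order $\prec$ and the characterization of arcs in Lemma~\ref{lemma2}), not in distortion control.
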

The construction of our higher dimensional analog of the
trigonometric functions 
is similar to the construction of Zorich's map
as given in~\cite[Section 6.5.4]{Iwaniec01}.
We begin with a bi-Lipschitz map $F$ from the half-cube
$$
\left\{x=(x_1,\dots,x_d)\in\R^d:\|x\|_\infty\leq 1, x_d\geq 0 \right\}
=[-1,1]^{d-1}\times [0,1]$$
to the upper half-ball
$$\{x\in\R^d:\|x\|_2 \leq 1,x_d\geq 0\}$$
which maps the 
face $[-1,1]^{d-1}\times \{1\}$ 
to the hemisphere
$\{x\in\R^d:\|x\|_2 = 1,x_d\geq 0\}$.
We will give an explicit construction of such a bi-Lipschitz
map $F$ in
Section~\ref{explicit}.
Next we define $F:[-1,1]^{d-1}\times (1,\infty)\to\R^d$ by
$$F(x)=\exp(x_d-1)F(x_1,\dots,x_{d-1},1).$$
The map $F$ is now defined on 
$[-1,1]^{d-1}\times [0,\infty)$, and it maps 
$[-1,1]^{d-1}\times [0,\infty)$
bijectively
onto the upper half-space $H^+:=\{x\in\R^d:x_d\geq 0\}$.
Using repeated reflections at hyperplanes we 
can extend $F$ to a map $F:\R^d\to\R^d$.

It turns out 
that the map $F$ is quasi\-regular.
However, we shall not actually use this fact.
On the other hand, the quasi\-regularity of $F$ is one of the 
underlying ideas in the proofs, and thus we make some remarks 
about quasiregular maps
in Section~\ref{quasi}.
We also show there that our map $F$ is indeed quasiregular.

We note that since $F$ is locally bi-Lipschitz, 
the restriction of $F$ to any line is absolutely continuous, and
$F$ is differentiable almost
everywhere. We denote by
$$\|DF(x)\|:=\sup_{\| y\|=1}\| DF(x)(y)\|$$
the operator norm of the derivative $DF(x)$.
(Here and in the following  $\|y\|=\|y\|_2$ for $y\in\R^d$; 
that is, unless specified otherwise we consider the
Euclidean norm in $\R^d$.)
We also put
$$\ell(DF(x)):=\inf_{\| y\|=1}\| DF(x)(y)\|.$$
We note that it follows from the definition of $F$ that
if $x,x' \in (-1,1)^{d-1}\times (1,\infty)$ and $x_j=x_j'$ for
$1\leq j\leq d-1$, then 
\begin{equation}\label{DFexp}
DF(x')=\exp(x_d'-x_d) DF(x)
\end{equation}
 whenever these derivatives exist.

It is easy to see that 
$$\beta:=\essinf_{ x\in\R^d}\ell(DF(x))>0$$
for our map~$F$. We choose $\lambda >1/\beta$ and consider
the map $f=\lambda F$. Clearly $f$ is quasi\-regular and
\begin{equation}
\label{1a}
\alpha:=\essinf_{x\in\R^d}\ell(Df(x))=\lambda \beta>1,
\end{equation}
that is, $f$ is locally uniformly expanding in $\R^d$.

We put $S:=\Z^{d-1}\times\{-1,1\}$ and for 
$r=(r_1,\dots,r_d)\in S$ we define
$$T(r):=\{x\in\R^d:|x_j-2r_j|\leq 1 \text{ for } 1\leq j\leq d-1,\;
r_dx_d\geq 0\}.$$
We find that if 
$$\sigma(r):=\sum_{j=1}^{d-1}r_j+\frac12(r_d-1)$$ 
is even, then $f$ maps
$T(r)$ bijectively onto $H^+$.
If $\sigma(r)$ is odd, then
$f$ maps $T(r)$ bijectively onto $H^-:=\{x\in\R^d:x_d\leq 0\}$.

For a sequence $\s=(s_k)_{k\geq 0}$
of elements of $S$ we put
$$H(\s):=\left\{ x\in\R^d:f^k(x)\in T(s_k)\ \mbox{for all}\ k\geq 0
\right\}.$$ 
Evidently $\R^d=\sum_{\s\in \S} H(\s)$,
where $\S$ is the set of all sequences with elements in~$S$
for which $H(\s)$ is not empty.
\begin{proposition}\label{prop1}
If $\s\in\S$ then
$H(\s)$ is a hair.
\end{proposition}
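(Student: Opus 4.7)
The plan is to parametrize $H(\s)$ as a limit of iterated inverse-branch pullbacks. First I would observe that since $f$ maps each tract $T(r)$ bijectively onto $H^+$ or $H^-$ and $\ell(Df)\geq\alpha>1$ almost everywhere by~\eqref{1a}, each inverse branch $\phi_r$ of $f|_{T(r)}$ is globally $(1/\alpha)$-Lipschitz (by integrating along paths). The compositions $\psi_n:=\phi_{s_0}\circ\phi_{s_1}\circ\cdots\circ\phi_{s_{n-1}}$ are then $\alpha^{-n}$-Lipschitz, and an easy induction yields $H_n(\s)=\psi_n(T(s_n))$, so $H(\s)=\bigcap_n \psi_n(T(s_n))$.

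Next I would choose for each $n$ a point $q_n(t)\in T(s_n)$ with horizontal coordinates $(2s_{n,1},\ldots,2s_{n,d-1})$ and last coordinate $s_{n,d}h_n(t)$, where the heights $h_n\colon[0,\infty)\to[0,\infty)$ are defined recursively so that $\phi_{s_n}(q_{n+1}(t))$ lies close to $q_n(t)$ in $T(s_n)$. The exponential structure $F(x)=\exp(x_d-1)F(x_1,\ldots,x_{d-1},1)$ for $x_d\geq 1$ forces $\phi_r(y)_d\sim\log\|y\|$ for large $\|y\|$, which suggests the recursion $h_n(t)\asymp\log\max(|s_{n+1,1}|,\ldots,|s_{n+1,d-1}|,h_{n+1}(t))$, iterated backward from some prescribed sequence of values. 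With such $h_n$, the horizontal distance between $\phi_{s_n}(q_{n+1}(t))$ and $q_n(t)$ is at most $\sqrt{d-1}$ (both lie in $T(s_n)$), while the vertical distance is $O(1)$ by the recursion, so
\[
|\gamma_{n+1}(t)-\gamma_n(t)|\leq \alpha^{-n}|\phi_{s_n}(q_{n+1}(t))-q_n(t)|\leq C\alpha^{-n}
\]
for $\gamma_n(t):=\psi_n(q_n(t))$, and $\gamma(t):=\lim_n\gamma_n(t)$ exists and lies in $H(\s)$.

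I would then verify the hair properties. Continuity of $\gamma$ follows from uniform convergence on compact subsets of $[0,\infty)$. Injectivity comes from monotonicity of $h_0$: if $\gamma(t)=\gamma(t')$ then the iterates $f^n(\gamma(t))$ and $f^n(\gamma(t'))$ stay close to $q_n(t)$ and $q_n(t')$ respectively, forcing $h_n(t)=h_n(t')$ for every $n$. Every $x\in H(\s)$ arises as $\gamma(t_x)$ for a parameter $t_x$ extracted from the $d$-coordinates $(f^n(x))_d$ via the (invertible) heights $h_n$. Finally, $\gamma(t)\to\infty$ as $t\to\infty$ because $\gamma(t)_d\sim s_{0,d}h_0(t)$.

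The main obstacle will be the simultaneous definition of the height functions $h_n$: one must ensure that the recursion is consistent for every admissible sequence $\s\in\S$, that $h_0$ is a homeomorphism of $[0,\infty)$ (for injectivity), and that the resulting parametrization is surjective onto $H(\s)$. This requires careful use of the explicit construction of $F$ in Section~\ref{explicit}, in particular the bi-Lipschitz estimates on $F|_{[-1,1]^{d-1}\times[0,1]}$ and precise control of $\phi_r(y)_d$ for $\|y\|$ large.
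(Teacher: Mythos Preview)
Your approach is genuinely different from the paper's. The paper does not build an explicit parametrization at all: it first notes that
\[
X:=H(\s)\cup\{\infty\}=\bigcap_{k\geq 0}\bigl(\Lambda^{s_0}\circ\cdots\circ\Lambda^{s_k}\bigr)(T(s_{k+1}))\cup\{\infty\}
\]
is a nested intersection of compact connected subsets of $\overline{\R^d}$, hence compact and connected, and then appeals to a continuum-theoretic criterion (Lemma~\ref{lemma2}): a non-degenerate compact connected metric space admitting a strict linear order whose order topology coincides with the metric topology is an arc. The order is defined dynamically by $x\prec y$ iff $|y^k_d|>|x^k_d|+M$ for some $k$, with $M$ as in Lemma~\ref{lemma1}; that lemma yields antisymmetry and transitivity, the expansion estimate~\eqref{2a} gives totality, and openness of the defining inequality shows the two topologies agree. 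This bypasses the height recursion entirely and delivers the endpoint for free as the $\prec$-minimal element.

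Your explicit pullback-of-central-curves construction is the classical Devaney--Krych/Schleicher ray method and can in principle be carried through here, but the proposal has a real gap at surjectivity. A height recursion of the form $h_n(t)\asymp\log\max(|s_{n+1,j}|,h_{n+1}(t))$, seeded by a growing function of $t$, produces only points whose forward orbits escape along the prescribed itinerary; it does not manufacture the endpoint $E(\s)$, whose orbit may be bounded (even eventually trapped in $\{x_d=0\}$). Your sentence ``every $x\in H(\s)$ arises as $\gamma(t_x)$ for a parameter $t_x$ extracted from the $d$-coordinates $(f^n(x))_d$'' presupposes that such coordinates tend to infinity and are compatible with the recursion, which fails precisely at non-escaping points. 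To close the argument you would need an additional \emph{landing} step: show that $\gamma$ extends continuously to $t=0$ and that $\gamma((0,\infty))$ already exhausts the escaping part of $H(\s)$, so that $H(\s)\setminus\gamma((0,\infty))$ is a single point. That is doable but is exactly the work the paper's order-topology argument avoids; it also does not require the explicit form of $F$ from Section~\ref{explicit}, only the uniform expansion~\eqref{1a} and the cross-sectional bound~\eqref{2c1}.
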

For $\s\in\S$ we denote by $E(\s)$ the endpoint of $H(\s)$.
\begin{proposition}\label{prop2}
If $\s'\neq\s''$ then 
$H(\s')\cap H(\s'')=\emptyset$ or $H(\s')\cap H(\s'')=\{E(\s)\}$.
\end{proposition}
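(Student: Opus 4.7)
The plan is to show that every point $x$ in a non-empty intersection $H(\s')\cap H(\s'')$ with $\s'\neq\s''$ has bounded forward orbit, and then to identify such a point with the endpoint of each hair by a uniqueness argument.

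First I would locate the disagreement: let $k$ be the smallest index with $s'_k\neq s''_k$. If $T(s'_k)\cap T(s''_k)$ is empty then $H(\s')\cap H(\s'')$ is empty and we are done; otherwise $f^k(x)$ lies on a common boundary face of two adjacent tiles. I would then verify, using the explicit construction in Section~\ref{explicit}, that $F$ sends every tile boundary into the hyperplane $\{y_d=0\}$: on the basic half-cube, all faces other than the top one are mapped into the equatorial disk $\{y_d=0,\,\|y\|\le 1\}$; the exponential extension $F(x)=\exp(x_d-1)F(x_1,\dots,x_{d-1},1)$ preserves this property, as do the reflections used to extend $F$ to all of $\R^d$. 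Hence $f^{k+1}(x)\in\{y_d=0\}$. A similar inspection shows that $F$ maps $\{y_d=0\}$ into $\{y_d=0,\,\|y\|\le 1\}$, whence $f$ sends $\{y_d=0\}$ into the compact set $\{y_d=0,\,\|y\|\le\lambda\}$. In particular, $\{f^n(x)\}_{n\ge k+2}$ is contained in this compact set, so the forward orbit of $x$ is bounded.

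Next I would establish uniqueness of a bounded-orbit point on each hair. Because $f|_{T(r)}$ is a bijection onto the convex half-space $H^\pm$ and $\ell(Df)\ge\alpha>1$, its inverse has operator norm bounded by $1/\alpha$; convexity of $H^\pm$ then yields the global Lipschitz estimate $|f(y)-f(z)|\ge\alpha|y-z|$ for all $y,z\in T(r)$. Iterating along the common itinerary gives $|f^n(y)-f^n(z)|\ge\alpha^n|y-z|$ for $y,z\in H(\s)$, which is incompatible with both orbits being bounded unless $y=z$.

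The main obstacle is the final step: identifying this unique bounded-orbit point with the endpoint $E(\s)$. I expect this to emerge from the proof of Proposition~\ref{prop1}, where the hair appears as a nested intersection of ``tubes'' $V_n(\s)$ pulled back from $T(s_n)$ along the itinerary. The cross-sections of these tubes shrink at rate $\alpha^{-n}$, while the exponential relation~\eqref{DFexp} forces the non-endpoint portion of each tube to reach arbitrarily large $x_d$, so that orbits of non-endpoint points escape. The endpoint $E(\s)$ should appear as the distinguished bounded end of the tubes---in particular with $f(E(\s))\in\{y_d=0\}$---so that its own forward orbit is bounded. Combined with the previous two steps, this yields $x=E(\s')=E(\s'')$ and hence $H(\s')\cap H(\s'')=\{E(\s')\}$.
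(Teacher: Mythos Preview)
Your first two steps are sound: a point $y$ in the intersection eventually has its orbit trapped in the compact set $\{x_d=0,\ \|x\|\le\lambda\}$, and the expansion estimate~\eqref{2a} forces at most one point with bounded orbit on any given hair. The error is in the third step. Your expectation that $f(E(\s))\in\{x_d=0\}$, or more generally that $E(\s)$ always has bounded orbit, is false: as the paper remarks in Section~\ref{quasi}, the set $\{E(\s):\s\in\S\}$ meets the escaping set $I(f)$, so some endpoints have unbounded orbits. The endpoint of a hair is a dynamical object (the minimal element for the order~$\prec$ built in the proof of Proposition~\ref{prop1}), not a geometric one tied to a tile boundary; there is no reason the nested tubes $\Lambda^{s_0}\circ\cdots\circ\Lambda^{s_k}(T(s_{k+1}))$ should terminate on the hyperplane $\{x_d=0\}$.

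The paper's argument is shorter and avoids this issue by working directly with the order~$\prec$. Once one knows $y^k_d=0$ for all $k\ge m+1$, any relation $x\prec y$ would, through Lemma~\ref{lemma1}, force $|y^j_d|>M$ for all large~$j$, a contradiction. Hence $y$ is $\prec$-minimal in both $H(\s')$ and $H(\s'')$, and the minimal element is by construction the endpoint. Your approach could be salvaged by appealing instead to the contrapositive fact that every \emph{non}-endpoint escapes (Lemma~\ref{lemma3}); but that lemma is established only after Proposition~\ref{prop2} and itself rests on Lemma~\ref{lemma1} and the order~$\prec$, so the paper's direct order-theoretic argument is the cleaner route.
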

\begin{proposition}\label{prop3}
$\dim\left( \bigcup_{\s\in \S} H(\s)\backslash \{ E(\s)\}\right)=1$.
\end{proposition}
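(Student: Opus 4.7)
The lower bound $\dim\geq 1$ is immediate, since each $H(\s)\setminus\{E(\s)\}$ is a non-degenerate continuous arc. For the upper bound my plan follows Karpi\'nska's strategy from the planar case, adapted to the quasiregular setting.

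The first step is to extract from the construction underlying Proposition~\ref{prop1} that $x\in H(\s)\setminus\{E(\s)\}$ exactly when $|f^k(x)_d|\to\infty$, and indeed super-exponentially once an iterate has entered $\{|x_d|\geq 1\}$: there $\|F(z)\|\asymp e^{z_d-1}$ by the explicit form of $F$, so $f=\lambda F$ gives $\|f^{j+1}(x)\|\gtrsim\lambda e^{|f^j(x)_d|-1}$. Fix $M>1$ large and set
\[
A_M:=\{x\in\R^d:|f^k(x)_d|\geq M\text{ for every }k\geq 0\}.
\]
Every non-endpoint orbit eventually lies permanently in $A_M$, so
\[
\bigcup_{\s\in\S}(H(\s)\setminus\{E(\s)\})\subseteq\bigcup_{k_0\geq 0}f^{-k_0}(A_M),
\]
and since $f^{-k_0}$ is a countable union of Lipschitz branches $\phi_{s_0}\circ\cdots\circ\phi_{s_{k_0-1}}$---each $\phi_s=(f|_{T(s)})^{-1}$ being Lipschitz thanks to \eqref{1a}---it is enough to prove $\dim A_M\leq 1$.

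To bound $\dim A_M$, I cover it by symbolic slabs. For each $k\geq 0$, each prefix $(s_0,\ldots,s_k)\in S^{k+1}$, and each integer $N\geq M$, put
\[
B(s_0,\ldots,s_k,N):=\{x\in T(s_0):f^j(x)\in T(s_j)\text{ for }1\leq j\leq k,\ |f^k(x)_d|\in[N,N+1]\}.
\]
For each $k$ the non-empty slabs cover $A_M$. Their images under $f^k$ are unit boxes of diameter at most $2\sqrt{d-1}+1$, and the explicit form of $F$ together with \eqref{DFexp} yields $\|DF(z)\|\asymp\|f(z)\|$ whenever $|z_d|\geq 1$. By the chain rule, the branch of $f^{-k}$ sending such a unit box onto $B$ is Lipschitz with constant $\lesssim\prod_{j=1}^k\|f^j(x)\|^{-1}$, which on $A_M$ decays super-exponentially in $k$.

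The final step is the summation. The admissible transverse components of each $s_j$ lie in a $\Z^{d-1}$-cube of side $\lesssim\|f^j(x)\|$, giving $\lesssim\|f^j(x)\|^{d-1}$ choices, and the heights $N$ at step $k$ contribute $\lesssim\|f^k(x)\|$ further options. Demonstrating that the super-exponential contraction of $\diam B$ overpowers this polynomial symbolic count, so that $\sum(\diam B)^{1+\varepsilon}<\infty$ for every $\varepsilon>0$, is the main technical obstacle. In dimensions $d\geq 3$ the naive Euclidean-box estimate is not tight enough; the argument must exploit the extreme anisotropy of $B$ around a single hair piece, namely that the hairs in $T(s_k)\cap\{|x_d|\in[N,N+1]\}$ are almost straight in the $x_d$-direction with transverse spread $\lesssim e^{-N}$, so that $B$ is in fact a tube whose transverse extent is super-exponentially smaller than its length. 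The sharper diameter this refined geometric picture provides is precisely what is needed to close the estimate and conclude $\dim A_M\leq 1$.
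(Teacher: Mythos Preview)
Your outline has the right overall shape (reduce to a forward-invariant set, cover by pulled-back boxes, compare count against contraction), but the crucial geometric ingredient is missing, and the fix you propose in the last paragraph does not work.

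The set $A_M=\{x:|f^k(x)_d|\geq M\text{ for all }k\}$ is too large. It contains escaping endpoints as well as non-endpoints, and there is no reason to expect $\dim A_M\leq 1$; for the planar exponential the analogous set has dimension strictly bigger than~$1$ for every finite~$M$. More concretely, your own counting shows the problem: with $\sim\|f^j(x)\|^{d-1}$ transverse choices for $s_j$ and contraction $\sim\|f^j(x)\|^{-1}$ per step, the sum $\sum(\diam B)^{\rho}$ over slabs is controlled only for $\rho>d-1$, not for $\rho$ close to~$1$. Your proposed rescue --- that the slab $B(s_0,\ldots,s_k,N)$ is a tube with transverse extent $\lesssim e^{-N}$ --- is not correct: the image $f^k(B)$ is the full box $T(s_k)\cap\{|x_d|\in[N,N+1]\}$ of bounded aspect ratio, and since $f$ is quasiregular the pullback $B$ is again essentially round, not a thin tube.

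What the paper actually uses is a much sharper restriction on where non-endpoint orbits can land. Comparing a non-endpoint $y$ with the endpoint $x=E(\s)$ of the same hair and exploiting Lemma~\ref{lemma1}, one proves (Lemma~\ref{lemma3}) that $f^k(y)$ eventually lies in the parabolic region
\[
\Omega=\bigl\{x:\ |x_d|\geq M,\ \|p(x)\|\leq \exp\!\sqrt{\log|x_d|}\,\bigr\}.
\]
The point is that $\|p(y^k)\|$ is controlled by $\|x^k\|$, while $|y^k_d|$ outruns $\|x^k\|$ by an extra exponential. It is this sub-polynomial transverse bound on $\Omega$, not any anisotropy of the pullback cylinders, that collapses the box count at level $k$ from $|y^k_d|^{d}$ down to $|y^k_d|^{1+\varepsilon}$ and makes the covering lemma (Lemma~\ref{lemma5}) yield $\dim\leq 1$. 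Without an analogue of Lemma~\ref{lemma3} your argument cannot close.
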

Theorem 1 follows from these propositions.
\section{Preliminaries}
It follows from the definition of $F$ that
$$\| F(x)\|= \exp(|x_d|-1),\quad x
\in\R^d,\ |x_d|\geq 1,$$
so that
\begin{equation}\label{2b}
\| f(x)\|= \lambda \exp(|x_d|-1),\quad x
\in\R^d,\ |x_d|\geq 1.
\end{equation}
For $r\in S$ we denote by $\Lambda^r$ the inverse function
of $f|_{T(r)}$. Thus $\Lambda^r:H^+\to T(r)$
or $\Lambda^r:H^-\to T(r)$, depending on
whether $\sigma(r)$ is even or odd. 
For $x\in T(r)$ and $y=f(x)$ we have
$$\| D\Lambda^r(y)\|=\frac{1}{\ell(Df(x))}$$
and thus
\begin{equation}\label{n1}
\| D\Lambda^r(y)\|\leq \frac{1}{\alpha}
\end{equation}
by~\eqref{1a}.
It follows from~\eqref{n1} that
if $a,b\in T(r)$, then
$$\| a-b\|=\left\|\Lambda^r(f(a))-\Lambda^r(f(b))\right\|
\leq \frac{1}{\alpha}\| f(a)-f(b)\|.$$
Hence
\begin{equation}\label{2a}
\| f(a)-f(b)\|\geq\alpha\| a-b\|\quad\mbox{for}\ 
a,b\in T(r),\  r\in S.
\end{equation}
If $|x_d|\geq 1$ then we have 
$$\ell(Df(x))\geq \alpha \exp(|x_d|-1)=\frac{\alpha\| f(x)\|}{\lambda}
=\beta \| f(x)\|$$
by~\eqref{DFexp}, \eqref{1a} and~\eqref{2b}. 
Note that the condition  $|x_d|\geq 1$ is equivalent to
$\|y\|\geq  \lambda$.
Thus
\begin{equation}\label{n2}
\| D\Lambda^r(y)\|\leq\frac{1}{\beta \| y\|},\quad 
y\in\R^d,\ \|y\|\geq  \lambda.
\end{equation}
Similarly we deduce from~\eqref{DFexp} that there exists a
positive constant $\delta$ such that
\begin{equation}\label{n3}
\ell\left( D\Lambda^r(y)(x)\right)\geq \frac{\delta}{\| y\|} ,\quad
y\in\R^d,\ \|y\|\geq  \lambda.
\end{equation}

We shall also need the following result.
\begin{lemma}\label{lemma1}
Let $\s=(s_k)_{k\geq 0}$
be an element of $\S$ and let $x,y\in H(\s)$. For $k\geq 0$
we put $x^k=(x_1^k,\dots,x_d^k):=f^k(x)$ and
$y^k=(y_1^k,\dots,y_d^k):=f^k(y)$.

There exists $M>0$ with the following property: if
\begin{equation}
\label{2c}
|y_d^k|>|x_d^k|+M
\end{equation}
for some $k\geq 0$ then
$$ |y_d^{k+1}|>\frac{\lambda }{3}\exp{|y^k_d|}+M\geq 5|x_d^{k+1}|+M.$$
\end{lemma}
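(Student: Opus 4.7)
My approach rests on two structural features: the radial formula \eqref{2b}, which gives $\|y^{k+1}\|=\lambda\exp(|y_d^k|-1)$ as soon as $|y_d^k|\ge 1$, and the tile condition, which forces the first $d-1$ coordinates of $x^{k+1}$ and $y^{k+1}$ to differ by at most $2$. The gap hypothesis $|y_d^k|>|x_d^k|+M$ will make $\|x^{k+1}\|$ exponentially smaller than $\|y^{k+1}\|$, so by Pythagoras almost all the Euclidean length of $y^{k+1}$ must be concentrated in its last coordinate.

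First I take $M\ge 1$ so that \eqref{2b} applies to $y^k$. Using that $F$ sends the fundamental half-cube into the unit ball and that this bound is preserved by the reflections defining $F$ globally, one has $\|f(x)\|\le\lambda\exp|x_d|$ for every $x\in\R^d$. Applied to $x^k$ together with the hypothesis, this yields
$$\|x^{k+1}\|\ \le\ \lambda\exp|x_d^k|\ \le\ \lambda\exp(|y_d^k|-M)\ =\ e^{1-M}\|y^{k+1}\|.$$

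Now since $x^{k+1},y^{k+1}\in T(s_{k+1})$, the definition of $T(s_{k+1})$ forces $|y_j^{k+1}-x_j^{k+1}|\le 2$ for $1\le j\le d-1$, hence $|y_j^{k+1}|\le\|x^{k+1}\|+2$. Pythagoras then yields
$$|y_d^{k+1}|^2\ \ge\ \|y^{k+1}\|^2-(d-1)\bigl(\|x^{k+1}\|+2\bigr)^2.$$
Because $\|x^{k+1}\|/\|y^{k+1}\|\le e^{1-M}$ and $\|y^{k+1}\|\ge(\lambda/e)e^M$, the ratio $|y_d^{k+1}|/\|y^{k+1}\|$ can be driven above any prescribed $c<1$ by enlarging $M$. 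Fixing some $c\in(e/3,1)$ and inserting $\|y^{k+1}\|=(\lambda/e)\exp|y_d^k|$ gives $|y_d^{k+1}|\ge(c\lambda/e)\exp|y_d^k|$; the excess over $(\lambda/3)\exp|y_d^k|$ equals $(c/e-1/3)\lambda\exp|y_d^k|$ and dominates $M$ for $M$ large, giving the first inequality. The second inequality reduces via $|x_d^{k+1}|\le\|x^{k+1}\|\le\lambda e^{-M}\exp|y_d^k|$ to $5e^{-M}\le 1/3$, which holds for $M\ge\ln 15$.

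The main obstacle is purely bookkeeping: a single $M$, depending only on $d$ and $\lambda$, must make every inequality above hold simultaneously. Since each estimate involves only those parameters, this is routine.
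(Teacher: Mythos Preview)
Your proof is correct and follows essentially the same route as the paper's: both use the tile condition to bound $\|p(y^{k+1})\|$ by $\|x^{k+1}\|$ plus a constant, then combine the radial formula~\eqref{2b} with the gap hypothesis to force nearly all of $\|y^{k+1}\|$ into the last coordinate. The only cosmetic difference is that the paper extracts $|y_d^{k+1}|$ via the inequality $|y_d^{k+1}|\ge\|y^{k+1}\|-\|p(y^{k+1})\|$ and subtracts directly, whereas you use Pythagoras and a ratio argument; both lead to the same conclusion with the same dependence on $d$ and $\lambda$.
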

\noindent
{\em Proof.}  We will denote by $p$ the projection
$$p:\R^d\to\R^{d-1},\ 
 (x_1,\dots,x_{d-1},x_d)\mapsto(x_1,\dots,x_{d-1}).$$
Since $|x^k_j-y^k_j|\leq 2$ for $1\leq j\leq d-1$
and all $k$ we have 
\begin{equation}
\label{2c1}
\left\| p(x^{k})- p(y^{k})\right\|
\leq 2\sqrt{d-1}
\end{equation}
for all~$k$.

Suppose now that (\ref{2c}) holds. 
Then using (\ref{2b}) and (\ref{2c1})
we obtain
\begin{eqnarray*}
\left|y^{k+1}_d\right|&\geq&\left\| y^{k+1}\right\|
-\left\| p(y^{k+1})\right\|\\
&\geq &
\lambda \exp\left({\left|y^k_d\right|}-1\right)-
\left\| p(x^{k+1})\right\|-2\sqrt{d-1}\\
&\geq& \lambda \exp\left(\left|y^k_d\right|-1\right)-
\lambda \exp\left({\left|x^k_d\right|}-1\right)-2\sqrt{d-1}\\
&\geq& \lambda \exp\left(\left|y^k_d\right|-1\right)-
\lambda \exp\left({\left|y^k_d\right|}-M-1\right)-2\sqrt{d-1}\\
&=&
\frac{\lambda}{e} \left(1-e^{-M}\right)\exp\left|y^k_d\right|
-2\sqrt{d-1}.
\end{eqnarray*}
Noting that $|y^k_d|>M$ by (\ref{2c}) we find that if $M$
is sufficiently large then
$$\left|y^{k+1}_d\right|\geq 
\frac{\lambda }{3}\exp{\left|y^k_d\right|}+M.$$
Since
$$\frac{\lambda }{3}\exp{\left|y^k_d\right|}>
\frac{\lambda }{3}e^M\exp{\left|x^k_d\right|}
=\frac{\lambda e}{3}e^M\left\|x^{k+1}\right\|
\geq \frac{\lambda e}{3}e^M\left|x^{k+1}_d\right|
$$
the last inequality
in the conclusion of the lemma also holds
if $M$ is large.

\section{Proof of the Propositions}
\begin{proof}[Proof of Proposition 1]
For a sequence $\s=(s_k)$ in $\S$ we have
$$H(\s)=\bigcap_{ k\geq 0}
\left(\Lambda^{s_0}\circ\Lambda^{s_1}\circ
\ldots\circ\Lambda^{s_k}\right)(T(s_{k+1})).$$
Thus $X:= H(\s)\cup\{\infty\}$ is an intersection of nested,
connected, compact subsets of $\overline{\R^d}:=
\R^d\cup\{\infty\}$. This implies that $X$ is
compact and connected.

To prove that $H(\s)$ is a hair we follow 
Rottenfu{\ss}er, R\"uckert, Rempe and Schleicher~\cite{R3S} and
use the following lemma from~\cite{N}.
\begin{lemma}\label{lemma2}
Let $X$ be a non-empty, compact,
connected metric space. Suppose that there is a
strict linear ordering $\o$ on $X$ 
such that the order topology
on $X$ agrees with the metric topology. Then either $X$
consists of a single point or there is an order-preserving
homeomorphism from $X$ onto $[0,1]$.
\end{lemma}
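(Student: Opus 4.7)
My plan is to apply the standard blueprint for characterizing compact connected linear orders: extract a countable order-dense subset, map it order-isomorphically into the rationals via Cantor's back-and-forth theorem, and extend the map to all of $X$ by order-completeness. I assume throughout that $X$ has more than one point, since the single-point alternative of the conclusion is otherwise automatic.

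The hypotheses supply the order-theoretic ingredients. Compactness in the order topology forces a minimum $m$ and a maximum $M$: otherwise, say, $\{(-\infty,x)\}_{x\in X}$ is an open cover admitting no finite subcover. Compactness also yields order-completeness, since for any non-empty $A\subset X$ the closed upper sections $\{y:y\succeq a\}_{a\in A}$ have the finite intersection property and the minimum of their common intersection is $\sup A$. Connectedness in turn forces density of the order: if $a\prec b$ admitted no element strictly in between, then $X=(-\infty,b)\cup(a,+\infty)$ would separate $X$ into two disjoint non-empty open sets.

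Now pick a countable dense subset $D\subset X$ (a compact metric space is separable) and assume without loss of generality that $m,M\in D$. The hypothesis that the order and metric topologies coincide upgrades metric density to order-density: every non-empty order-interval is metric-open by hypothesis, hence meets~$D$. The set $D^\circ:=D\setminus\{m,M\}$ is then a countable densely ordered set without endpoints, so Cantor's back-and-forth theorem supplies an order-isomorphism $\psi:D^\circ\to\mathbb{Q}\cap(0,1)$, which we extend by $\psi(m):=0$ and $\psi(M):=1$.

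Finally, define
\[
\varphi(x):=\sup\bigl\{\psi(d):d\in D,\ d\preceq x\bigr\},\qquad x\in X.
\]
A direct computation gives $\varphi(d)=\psi(d)$ for every $d\in D$, and strict monotonicity of $\varphi$ then follows from the order-density of~$D$. The main obstacle is continuity: given $x\in X$ with $\varphi(x)<t$, density of $\psi(D)=\mathbb{Q}\cap[0,1]$ in $[0,1]$ produces $d\in D$ with $\varphi(x)<\psi(d)<t$, whence $x\prec d$ and $(-\infty,d)\subset\varphi^{-1}([0,t))$; a symmetric argument handles the subbasic open set $(t,1]$. Thus $\varphi$ is continuous in the order topology, and hence in the metric topology by hypothesis. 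The image $\varphi(X)$ is a compact connected subset of $[0,1]$ containing both $0$ and $1$, so $\varphi$ is surjective, and a continuous bijection from a compact space to a Hausdorff space is a homeomorphism. This produces the required order-preserving homeomorphism $X\to[0,1]$.
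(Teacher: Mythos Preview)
Your proof is correct. The paper, however, does not prove Lemma~\ref{lemma2} at all: it simply quotes the result from Nadler's \emph{Continuum Theory} and moves on. What you have supplied is a complete, self-contained argument via the classical route---extract a countable order-dense set from separability, invoke Cantor's back-and-forth to embed it in $\mathbb{Q}\cap(0,1)$, and extend by suprema using order-completeness (which you correctly derive from compactness). Each step checks out; in particular, your verification that the extended map is continuous via subbasic preimages and your use of the compact-to-Hausdorff bijection criterion are sound. The only difference from the paper, then, is that you actually prove the lemma while the paper outsources it; this buys the reader independence from an external reference at the cost of a page of standard order-theoretic reasoning.
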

To define the linear ordering on $X=H(\s)\cup\{\infty\}$
we choose $M$ according to Lemma~\ref{lemma1}.
For $x,y\in H(\s)$ we say that $x\o y$ if
there exists $k\geq 0$ such that $|y_d^k|>|x_d^k|+M$,
and we define $x\o\infty$ for all $x\in H(\s)$.
Lemma~\ref{lemma1} implies that $x\o y$ and $y\o x$ cannot
hold simultaneously.
Another easy consequence of Lemma~\ref{lemma1} is that
our relation $\o$ is transitive.

To show that it is a linear ordering we notice that
$\| x^k-y^k\|\geq\alpha^k\| x-y\|$ by~(\ref{2a}). 
Using~\eqref{2c1} we obtain
\begin{equation}\label{2c3}
|x^k_d-y^k_d|\geq
\left\|x^k-y^k\right\| -\left\|p(x^k)-p(y^k)\right\|
\geq \alpha^k\| x-y\|-2\sqrt{d-1}.
\end{equation}
Thus $x\neq y$ implies either $x\o y$ or $y\o x$.

Now we prove that the order topology on $X$
is the same as the topology induced from~$\overline{\R^d}$.
We have to show that the identity map 
from $X$ with the induced topology to
$X$
with the order topology is a homeomorphism.
Since $X$ with the induced topology is compact 
and since $X$ with the order topology
is Hausdorff, it suffices to show that the identity map
is continuous~\cite[p.~141, Theorem~8]{Kelley}.
Thus we only have to show that
the sets 
$$
U^-(a):=\left\{ w\in X:w\o a\right\}
\quad\text{and}\quad 
U^+(a):=\left\{ w\in X:w\oo a\right\}
$$
are open with respect to the induced topology for all $a\in X$.
In order to do so, let $w\in U^-(a)$ and choose the minimal $k$ such that
$|w^k_d|<|a^k_d|-M$.
Then there is a neighborhood $V$ of $w$ in $\R^d$ where
the same inequality is satisfied.
The intersection $V\cap H(\s)$ is a neighborhood
of $w$ that is contained in $U^-(a)$.
Thus  $U^-(a)$ is open with respect to the induced topology.
The proof for $U^+(a)$ is similar.

Thus the order topology on $X$
agrees with the topology induced from~$\overline{\R^d}$.
Proposition~\ref{prop1} now follows from Lemma~\ref{lemma2}.
\end{proof}

\begin{proof}[Proof of Proposition 2]
Let $y\in H(\s')\cap H(\s'')$.
Let $m$ be the smallest
subscript such that $s^\prime_m\neq s^{\prime\prime}_m$.
Then $f^m(y)$ belongs to the common boundary of
$T(s^\prime_m)$ and $T(s^{\prime\prime}_m)$.  From 
the definition of $f$
we conclude that $f^k(y)$ belongs to the hyperplane 
$\{x\in\R^d:x_d=0\}$
for all $k\geq m+1$.
This implies that $x\o y$ is impossible for any~$x$.
So $y$ is the  minimal element of the order $\o$
and thus an endpoint
of $H(\s')$ and $H(\s'')$.
\end{proof}

\begin{proof}[Proof of Proposition 3]
We follow the argument in~\cite{B} and with
$\psi:[1,\infty)\to\R$,
$$\psi(t):=\exp\left(\sqrt{\log t}\right)$$
and $M:=\max\{e,4\lambda\}$  we put
$$\Omega:=\left\{x\in\R^d:|x_d|\geq M,\ 
\|p(x)\| \leq
\psi\left(|x_d|\right)\right\}.$$
We then have
\begin{equation}\label{Me}
\|x\|\leq |x_d|+\|p(x)\|\leq|x_d|+\psi\left(|x_d|\right)\leq 2|x_d|,
\quad x\in\Omega.
\end{equation}
The following result is analogous to~\cite[Lemma 5.3]{B}.
\begin{lemma}\label{lemma3}
If $y\in H(\s)\backslash \{E(\s)\}$ then
$f^k(y)\to\infty$ as $k\to\infty$. Moroever, $f^k(y)\in\Omega$
for all large~$k$.
\end{lemma}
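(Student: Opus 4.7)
The plan is to handle the two conclusions of Lemma~\ref{lemma3} in turn, using Lemma~\ref{lemma1} iteratively combined with a tile-containment observation.

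\emph{Escape to infinity.} Since $E(\s)$ is the endpoint of $H(\s)$, by the construction in the proof of Proposition~\ref{prop1} it is the minimum of the order~$\prec$. Hence $E(\s) \prec y$, so there exists $k_0 \geq 0$ with $|y^{k_0}_d| > |E(\s)^{k_0}_d| + M$. Applying Lemma~\ref{lemma1} with $x := E(\s)$ simultaneously yields the growth estimate $|y^{k_0+1}_d| > \frac{\lambda}{3}\exp|y^{k_0}_d| + M$ and the comparison $|y^{k_0+1}_d| \geq 5|E(\s)^{k_0+1}_d| + M$; the latter preserves the hypothesis of Lemma~\ref{lemma1} so that the argument iterates. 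By induction, both bounds hold for all $k \geq k_0$, so $|y^k_d|$ grows doubly exponentially to infinity and $\|f^k(y)\| \geq |y^k_d| \to \infty$. In particular $|y^k_d| \geq M$ for all sufficiently large~$k$, verifying the first condition in the definition of $\Omega$.

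\emph{Entry into $\Omega$.} It remains to show $\|p(y^k)\| \leq \psi(|y^k_d|)$ for all large~$k$. The crucial observation is that $y^k$ and $E(\s)^k$ share the same itinerary $\s$, so both lie in the tile $T(s_k)$; since the $p$-projection of $T(s_k)$ is a cube of side $2$ (diameter $2\sqrt{d-1}$), we obtain
\[
\|p(y^k)\| \;\leq\; \|p(E(\s)^k)\| + 2\sqrt{d-1} \;\leq\; \|E(\s)^k\| + 2\sqrt{d-1}.
\]
It thus suffices to control $\|E(\s)^k\|$ in terms of $\psi(|y^k_d|)$. From the induction above, $|E(\s)^{k-1}_d| \leq (|y^{k-1}_d|-M)/5$, so by~\eqref{2b},
\[
\|E(\s)^k\| \;\leq\; \lambda\exp\!\bigl(|y^{k-1}_d|/5 - 1\bigr).
\]
Bootstrapping once more---applying this bound shifted by one to control $|E(\s)^{k-1}_d| \leq \|E(\s)^{k-1}\|$, then feeding the result back into~\eqref{2b}, and finally invoking the inverse relation $|y^{k-1}_d| \leq \log|y^k_d| + O(1)$ extracted from Lemma~\ref{lemma1}---a short computation produces
\[
\|E(\s)^k\| \;\leq\; C\exp\!\bigl(C'(\log|y^k_d|)^{1/5}\bigr)
\]
for constants $C,C'$ depending only on~$\s$. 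Since $1/5 < 1/2$, this grows much more slowly than $\psi(|y^k_d|) = \exp((\log|y^k_d|)^{1/2})$, and therefore $\|p(y^k)\| \leq \psi(|y^k_d|)$ for all sufficiently large~$k$.

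The main obstacle is the calibration of the iteration in the second part. A \emph{single} application of Lemma~\ref{lemma1} and~\eqref{2b} only gives the polynomial bound $\|E(\s)^k\| \lesssim |y^k_d|^{1/5}$, which \emph{overwhelms} $\psi(|y^k_d|)$ once $|y^k_d| > e^{25}$. The second bootstrap is essential: it exploits the doubly exponential growth of $|y^k_d|$ to compress the dependence to the doubly logarithmic exponent $(\log|y^k_d|)^{1/5}$, which is at last dominated by $\sqrt{\log|y^k_d|}$.
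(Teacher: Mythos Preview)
Your argument is correct and follows essentially the same route as the paper: compare $y$ with the endpoint $x=E(\s)$ via Lemma~\ref{lemma1}, use tile containment to get $\|p(y^k)\|\le\|x^k\|+2\sqrt{d-1}$, and then run the Lemma~\ref{lemma1}/\eqref{2b} machinery \emph{twice} to squeeze $\|x^k\|$ below $\psi(|y_d^k|)$. The paper organises the double application slightly differently --- it bounds $|y^{k-1}_d|\ge |x^{k-1}_d|^4$ first and ends with $\|x^k\|\le\exp\bigl((\log|y_d^k|)^{1/3}\bigr)$, whereas you arrive at $\|x^k\|\le C\exp\bigl(C'(\log|y_d^k|)^{1/5}\bigr)$ --- but either exponent beats $\tfrac12$, so both work. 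One small point: your invocation of~\eqref{2b} tacitly assumes $|x^{k-1}_d|\ge 1$; the paper splits off the trivial case where $\|x^{k-1}\|$ or $\|x^k\|$ is bounded before the main estimate, and you should do the same (or note that $\|f(x)\|\le\lambda e^{|x_d|}$ holds unconditionally).
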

\begin{proof}
Let $\s=(s_k)_{k\geq 0}\in\S$ such
that $y\in H(\s)$. 
With $x=E(\s)$ and the ordering $\o$ as in Section~3 we have
$x\o y$. As before, we put 
$x^k=f^k(x)$ and $y^k=f^k(y)$ 
for $k\geq 0.$
By Lemma~\ref{lemma1} we have
$$|y^k_d|\geq 5|x^k_d|+M$$
for all large~$k$.
Using~\eqref{2c3} we see that
$| y_d^k|\to\infty$ 
and hence $y^k\to\infty$ as $k\to\infty$.

Since
$$\left\| p(y^k)\right\|\leq \left\| p(x^k)\right\|+2\sqrt{d-1}
\leq\left\|x^k\right\|+2\sqrt{d-1}$$
by~\eqref{2c1} we see that $f^k(y)\in \Omega$ holds for large~$k$ if 
$\| x^k\|\leq R$, where $R$ is any fixed constant.
Noting that 
$$\left\| x^k\right\|\leq \lambda \exp\left|x^{k-1}_d\right|\leq \lambda 
\exp\left\| x^{k-1}\right\|,$$ 
we also find that $f^k(y)\in \Omega$ holds for all large $k$ for which
$\| x^{k-1}\|\leq \log (R/\lambda) $.

We may thus suppose that $\min\{\| x^k\|,\|x^{k-1}\|\}$
is large.
Lemma~\ref{lemma1} now yields for large~$k$ that
\begin{eqnarray*}
\left|y^{k-1}_d\right|&\geq&\frac{\lambda }{3}\exp{\left|y^{k-2}_d\right|}+M\\
&\geq& \frac{\lambda }{3}\exp\left(5\left|x^{k-2}_d\right|+M\right)+M\\
&\geq&\frac{e^M}{3\lambda ^4}\left(\lambda \exp{\left|x^{k-2}_d\right|}\right)^5+M\\
&\geq&\left\|x^{k-1}\right\|^4\\
&\geq&\left|x^{k-1}_d\right|^4,
\end{eqnarray*}
and hence that
\begin{eqnarray*}
\left|y^k_d\right|&\geq&\frac{\lambda }{3}\exp{\left|y^{k-1}_d\right|}+M\\
&\geq&
\frac{\lambda }{3}\exp\left(\left|x^{k-1}_d\right|^4\right)\\
&\geq&\frac{\lambda }{3}\exp\left(\left(\log\left\| x^k\right\|\right)^4\right)\\
&\geq&
\exp\left(\left(\log\left\|x^k\right\|\right)^3\right).
\end{eqnarray*}
Thus
\begin{eqnarray*}
\left\| p(y^k)\right\|&\leq&\left\| p(x^k)\right\|+2\sqrt{d-1}\\
&\leq&\left\| x^k\right\|+ 2\sqrt{d-1}\\
&\leq&\exp\left(\left(\log\left| y_d^k\right|\right)^{1/3}\right)+2\sqrt{d-1}\\
&\leq& \exp\sqrt{\log\left|y_d^k\right|}
\end{eqnarray*}
for large~$k$.
This means that $y_k\in\Omega$, and the proof of 
Lemma~\ref{lemma3} is completed.
\end{proof}
The following result~\cite[Lemma~5.2]{B} is a simple consequence
of some classical covering lemmas. Here we denote by
$B(x,r)$ the open ball of radius $r$ around a point $x\in\R^d$.
\begin{lemma} \label{lemma5}
Let $Y\subset \R^d$ and $\rho>1$.
Suppose that for all $y\in Y$ and $\eta >0$ there exist
$r(y)\in (0,1)$, $d(y)\in (0,\eta)$ and $N(y)\in \N$ satisfying
$d(y)^\rho N(y)\leq r(y)^d$ such that $B(y,r(y))\cap Y$ can be
covered by $N(y)$ sets of diameter at most $d(y)$. Then
$\dim Y\leq\rho$.
\end{lemma}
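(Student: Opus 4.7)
The plan is to bound the $\rho$-dimensional Hausdorff measure of $Y$ by a finite quantity, which immediately gives $\dim Y\leq\rho$. It suffices to show $\dim(Y\cap B(0,R))\leq\rho$ for every $R>0$, since Hausdorff dimension is stable under countable unions, so we may assume $Y\subset B(0,R)$.

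Fix $\eta>0$. The hypothesis provides a covering of $Y$ by the balls $\mathcal C=\{B(y,r(y)):y\in Y\}$, all of radius less than $1$. The Besicovitch covering theorem extracts a countable subfamily $\{B_i=B(y_i,r(y_i))\}_{i\in I}$ that still covers $Y$ and has multiplicity bounded by a dimensional constant $P=P(d)$: every point of $\R^d$ lies in at most $P$ of the $B_i$. For each $i$ the hypothesis also supplies a covering of $B_i\cap Y$ by sets $E_{i,1},\ldots,E_{i,N(y_i)}$ of diameter at most $d(y_i)<\eta$.

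The countable family $\{E_{i,j}:i\in I,\ 1\leq j\leq N(y_i)\}$ then covers $Y$ by sets of diameter at most $\eta$. Using $d(y_i)^\rho N(y_i)\leq r(y_i)^d$ we obtain
\[
\sum_{i\in I}\sum_{j=1}^{N(y_i)}(\diam E_{i,j})^\rho
\leq\sum_{i\in I}N(y_i)\,d(y_i)^\rho\leq\sum_{i\in I}r(y_i)^d,
\]
and, since each $B_i$ is contained in $B(0,R+1)$ and the multiplicity is at most $P$,
\[
\omega_d\sum_{i\in I}r(y_i)^d=\sum_{i\in I}|B_i|\leq P\,\Bigl|\bigcup_{i\in I}B_i\Bigr|\leq P\omega_d(R+1)^d,
\]
where $\omega_d$ denotes the volume of the unit ball in $\R^d$ and $|\cdot|$ Lebesgue measure. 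Thus the $\eta$-content satisfies $\mathcal H^\rho_\eta(Y)\leq P(R+1)^d$ uniformly in $\eta$, so $\mathcal H^\rho(Y)<\infty$ and $\dim Y\leq\rho$.

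The main obstacle is overlap control in the ball cover: without a multiplicity bound the sum $\sum r(y_i)^d$ could be infinite and the volume comparison would break down. The Besicovitch covering theorem (or an equivalent classical covering result in $\R^d$) is precisely the tool that, combined with the product inequality $d(y)^\rho N(y)\leq r(y)^d$, converts the Lebesgue volume of the ambient ball into the desired bound on the $\rho$-dimensional Hausdorff content.
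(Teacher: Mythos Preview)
Your proof is correct and follows exactly the approach the paper indicates: the paper does not give a detailed proof but cites \cite[Lemma~5.2]{B} and remarks that the result ``is a simple consequence of some classical covering lemmas,'' which is precisely what your use of the Besicovitch covering theorem supplies; the paper also makes the same reduction to bounded $Y$ that you use at the outset.
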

In~\cite[Lemma~5.2]{B} it is additionally assumed that $Y$ 
is bounded, but this hypothesis can be omitted, since the
Hausdorff dimension of a set is the supremum of
the Hausdorff dimensions of its bounded subsets.

We now begin with the actual proof of Proposition~\ref{prop3},
following the argument  in~\cite{B}.
Since $f$ is locally bi-Lipschitz, and since the Hausdorff
dimension is invariant under bi-Lipschitz maps,
Lemma~\ref{lemma3} implies that it suffices to show that
$$Y:=\left\{y\in H(\s)\backslash \{E(\s)\}:
f^k(y)\in\Omega \text{ for all }k\geq 0\right\}$$
has Hausdorff dimension~$1$.
We shall prove this using Lemma~\ref{lemma5}.

Let $y\in Y\cap H(\s)$ and,
as before, put $y^k=f^k(y)$.
With $x=E(\s)$ we deduce from Lemma~\ref{lemma1} that
\begin{equation} \label{yk}
|y_d^{j+1}|>\frac{\lambda }{3}\exp{|y^j_d|}+M
\end{equation}
for large~$j$.

We now fix a large $k$ and 
denote by $B_k$ the closed ball of radius $\frac12 |y^k_d|$ around $y_k$. 
We cover $B_k\cap \Omega$ by closed
cubes of sidelength $1$
lying in $\{x\in \R^d:|x_d|\geq \frac12
|y^k_d|\}$.
If $c>2^{d-1}$, then the number $N_k$ of cubes required
satisfies 
$$N_k\leq c\; |y^k_d|\; \psi\left(2 \left|y^k_d\right|\right)^{d-1},$$
provided
$k$  is large enough.
Given $\varepsilon>0$ we thus can achieve that
\begin{equation} \label{Nk}
N_k\leq \left|y^k_d\right|^{1+\varepsilon} 
\end{equation}
by choosing $k$ large.

Let $B_0$ be the component
of $f^{-k}(B_k)$ that contains~$y$.
With 
$$\varphi:=\Lambda^{s_0}\circ \Lambda^{s_1}\circ 
\dots \circ \Lambda^{s_{k-1}}$$
we have $B_0=\varphi(B_k)$. 
Using~\eqref{n1} and~\eqref{n2} we find that if $C$ is one 
of the cubes of sidelength $1$ used to cover $B_k\cap \Omega$, 
then
$$
\diam \varphi(C)
\leq \frac{1}{\alpha^{k-1}}\frac{2}{\beta \left|y^k_d\right|}
\diam C \leq \frac{1}{\left|y^k_d\right|}
$$
if $k$ is sufficiently large.
Thus we can cover $B_0\cap Y$ by $N_k$ sets of diameter $d_k$, where
\begin{equation} \label{dk}
d_k\leq \frac{1}{\left|y^k_d\right|}.
\end{equation}
In order to apply Lemma~\ref{lemma5} we estimate the radius $r_k$
of the largest ball around $y$ that is contained in~$B_0$.
Let $z\in \partial B_0$ with $\|z-y\|=r_k$
and let $\sigma_0$ be the straight line connecting $y$ and~$z$.
For $1\leq j\leq k$ we put $\sigma_j=f^j(\sigma_0)$, $B_j=f^j(B_0)$
and $z^j=f^j(z)$.
Then $\sigma_k$ connects $y^k$ to $z^k\in \partial B_k$ and
thus 
\begin{equation} \label{lsk}
\length(\sigma_k)\geq \frac12\left|y_d^k\right|.
\end{equation}
We deduce from~\eqref{n2} that
$$
\diam B_{k-1}=
\diam  \Lambda^{s_{k-1}}\left(B_k)\right)
\leq \frac{2}{\beta \left|y^k_d\right|}
\diam  B_k =  \frac{2}{\beta}$$
and hence 
$$\diam  B_j\leq \frac{2}{\beta}$$
for $j\leq k-1$ by~\eqref{n1}.
Since $|y_d^j|\geq M>4/\beta$ this implies that
$$\sigma_j\subset
B_j\subset B\left(y^j,\frac12\left|y_d^j\right|\right)
\subset B\left(y^j,\frac12\left\|y^j\right\|\right)$$
for $j\leq k-1$.
It thus follows from~\eqref{n3} and~\eqref{Me} that
$$\length \sigma_{j}
=\length  \Lambda^{s_{j}}(\sigma_{j+1})
\geq  \frac{2\delta}{3\left\|y^{j+1}\right\|}
\length \sigma_{j+1}
\geq  \frac{\delta}{3\left|y_d^{j+1}\right|}
\length \sigma_{j+1}$$
for $j\leq k-1$ and this implies that
$$
\length \sigma_k
\leq 
\left(\frac{3}{\delta}\right)^k 
\left(\prod_{j=1}^{k} \left|y_d^j\right|\right)
 \length \sigma_0
$$
Combining this with~\eqref{lsk}
we find that
$$r_k=\length \sigma_0
\geq \frac12 
\left(\frac{\delta}{3}\right)^k\frac{1}{\prod_{j=1}^{k-1} \left|y_d^j\right|}.
$$
Using~\eqref{yk} we see that we can achieve
\begin{equation} \label{rk}
r_k\geq \frac{1}{\left|y_d^k\right|^{\varepsilon}}
\end{equation}
by choosing $k$ large.

We thus find that we can cover $B(y,r_k)\cap Y$ 
by $N_k$ sets of diameter at most $d_k$, where 
$N_k$, $d_k$ and $r_k$ satisfy \eqref{Nk}, \eqref{dk}
and~\eqref{rk}.
With $\rho=1+(d+1)\varepsilon$ it follows from \eqref{Nk}, \eqref{dk}
and~\eqref{rk}
that
$$(d_k)^\rho N_k \leq \left|y_d^k\right|^{1+\varepsilon-\rho}
= \left|y_d^k\right|^{-d \varepsilon}\leq (r_k)^d.$$
Given $\eta>0$ we can also achieve that $r_k<1$ and $d_k<\eta$ by 
choosing $k$ large.
We thus see that the hypothesis of Lemma~\ref{lemma5}
are satisfied with $r(y)=r_k$, $d(y)=d_k$ and $N(y)=N_k$.

It follows that $\dim Y\leq \rho=1+(d+1)\varepsilon$. Since
$\varepsilon>0$ was arbitrary, we obtain  $\dim Y\leq  1$.
\end{proof}
\section{An explicit bi-Lipschitz map}
\label{explicit}
Let $B^+:=[-1,1]^{d-1}\times [0,1]$,
$B^-:=[-1,1]^{d-1}\times [-1,0]$,
$U^+:=\{x\in\R^d:\|x\|_2 \leq 1,x_d\geq 0\}$
and
$U^-:=\{x\in\R^d:\|x\|_2 \leq 1,x_d\leq 0\}$.
Then $h_1:=B^+\to B^-$, $x \mapsto x-
(0,\dots,0,1)$,
and $h_2:B^-\to U^-$, $x\mapsto (\|x\|_\infty/\|x\|_2)x$, are
both bi-Lipschitz, and with $X:=[-1,1]^{d-1}\times \{1\}$ and
$Y:=\{x\in\R^d:\|x\|_2 \leq 1,x_d=0\}$ we have
$h_2(h_1(X))=Y$.
It remains to  define a bi-Lipschitz map $h_3:U^-\to U^+$
with $h_3(Y)=\{x\in\R^d:\|x\|_2 =1,x_d\geq 0\}$.
Then $h:=h_3\circ h_2\circ h_1$ has the desired properties.

In order to define $h_3$ we note that 
$$T(z)=\frac{z+i}{iz+1}$$
defines 
a bi-Lipschitz
map from the lower half-disk
$\{z\in \C:|z|\leq 1,\im z\leq 0\}$
to the upper half-disc
 $\{z\in \C:|z|\leq 1,\im z\geq 0\}$, 
with $\{z\in \C:|z|\leq 1,\im z=0\}$
being mapped onto $\{z\in \C:|z|=1,\im z\geq 0\}$.
With $x=(x_1,\dots,x_d)=(p(x),x_d)$  and
$z=\|p(x)\|_2+ix_d$ it follows that 
$$h_3(x)=\left(\frac{p(x)}{\|p(x)\|_2} \re T(z), \im T(z)\right)$$
has the desired properties.
\section{Quasiregular maps}\label{quasi}
Let $\Omega\subset \R^d$ be open. A continuous map 
$f:\Omega\to\R^d$ is called quasi\-regular 
if it belongs to the Sobolev space
$W^1_{d,\loc}(\Omega)$ and if   there exists a
constant $K_O\geq 1$ such that
\begin{equation}
\label{dil}
\| DF(x)\|^d\leq K_O\,J_F(x)\quad\mbox{a.e.},
\end{equation}
where 
$J_F=\det DF$ denotes the Jacobian determinant.
Equivalently, there exists $K_I\geq 0$ such that
\begin{equation}
\label{dil2}
J_F(x)\leq K_I\,\ell(DF(x))^d\quad\mbox{a.e.}
\end{equation}
The smallest constants $K_O$ and $K_I$ for which the above estimates
hold are called the outer and inner dilatation. 
For  a thorough treatment of quasi\-regular maps we
refer to~\cite{Rick}.

To see that our map $F$ defined in Section~\ref{intro}
is quasi\-regular, we note that 
first that~\eqref{dil} holds on the half-cube
$(-1,1)^{d-1}\times (0,1)$
since $F$ is bi-Lipschitz there. 
By the same reason, ~\eqref{dil} holds on the 
bounded set  $(-1,1)^{d-1}\times (1,2)$.
Using~\eqref{DFexp} we deduce that~\eqref{dil} 
holds on $(-1,1)^{d-1}\times (1,\infty)$.
Thus~\eqref{dil} holds on $(-1,1)^{d-1}\times (0,\infty)$ and
in the sets obtained from this by reflection.
We deduce that $F$ is indeed quasi\-regular.

We mention that it follows from~\eqref{dil} and~\eqref{dil2}
that if $F$ is quasiregular, then 
\begin{equation}
\label{dil3}
\| DF(x)\|\leq K\: \ell(DF(x))\quad\mbox{a.e.}
\end{equation}
where $K=(K_OK_I)^{1/d}$.
We could also use~\eqref{dil3} 
instead of~\eqref{dil} or~\eqref{dil2}
in the definition of quasiregularity.
It follows from~\eqref{n2} and~\eqref{n3} that~\eqref{dil3}
holds for $|x_d|\geq 1$ with $K=1/(\beta\delta)$.
This one reason why we said in the introduction
that the quasiregularity of~$F$
is among the underlying ideas of the proof.

We note that for quasi\-regular maps there is no obvious definition
of the Julia set; see, however,~\cite{Be10,SunYang00}.
On the other hand, the escaping set $I(f)$ can be 
defined. It was shown 
in~\cite{Bergweiler08} that if $f$ is a
quasi\-regular self-map of~$\R^d$ with
an essential singularity at~$\infty$,
then $I(f)\neq\emptyset$. In fact, $I(f)$ has an unbounded
component. Fletcher and Nicks~\cite{FletcherNicks} have shown
that for quasiregular maps of polynomial type the 
boundary of the escaping set has properties similar to the
Julia set of polynomials.

We mention that for the entire functions $f(z)=\lambda e^z$ or
$\lambda\sin z+\mu$ considered in Theorems A--G
we have $I(f)\subset J(f)$ and
thus $J(f)=\overline{I(f)}$; see~\cite[Theorem~1]{Eremenko92}.
This plays an important role in the proofs of these theorems.
For example, McMullen actually proved that the conclusion of
Theorems~B and E holds with $J(f)$ replaced by $I(f)$.
Also, a crucial part in the proofs of Theorems~C, F and G is
based on the fact that points which are on a hair but which 
are not endpoints escape to
infinity under iteration very fast.

This also played an important role in our proof.
In particular,
for the map $f$ considered in this paper we have
$$\bigcup_{\s\in \S} H(\s)\backslash \{ E(\s)\}\subset I(f)$$
by Lemma~\ref{lemma3}.
On the other hand, it is not difficult to see that
$\{E(\s): \s\in \S\}$ intersects both $I(f)$ and the 
complement of $I(f)$.


\begin{thebibliography}{11}
\bibitem{Bar07}
K.\ Bara\'nski,
Trees and hairs for some hyperbolic entire maps of finite order,
Math. Z. 257 (2007), 33--59.
\bibitem{Bar08}
K.\ Bara\'nski,
Hausdorff dimension of hairs and ends for entire maps of finite order,
Math.\ Proc.\ Cambridge Philos.\ Soc. 145 (2008), 719--737.
\bibitem{Ber93}
W.\ Bergweiler,
Iteration of meromorphic functions.
{\rm Bull.\ Amer.\ Math.\ Soc.\ (N.\ S.)}
29 (1993), 151--188.
\bibitem{B} W. Bergweiler, 
Karpi\'nska's paradox in dimension three,
to appear in 
Duke Math.\ J., arxiv: 0902.2686.
\bibitem{Be10} W. Bergweiler, 
Iteration of quasi\-regular maps, preprint.
\bibitem{Bergweiler08}
W.\ Bergweiler, A.\ Fletcher, J.\ Langley and J.\ Meyer,
The escaping set of a quasi\-regular mapping,
Proc.\ Amer.\ Math.\ Soc.
137  (2009), 641--651.
\bibitem{DK} R.\ L.\ Devaney and M. Krych, Dynamics of $\exp(z)$,
Ergodic Theory Dynam.\ Systems 4 (1984), 35--52.
\bibitem{DT} R.\ L.\ Devaney and F. Tangerman, Dynamics of entire functions
near the essential singularity, 
Ergodic Theory Dynam.\ Systems 6 (1986), 489--503.
\bibitem{E} A. Eremenko, Iteration of entire functions,
``Dynamical Systems and Ergodic Theory'', Banach Center Publ. 23,
Polish Sci. Publ., Warsaw 1989, 339--345.
\bibitem{Eremenko92}
A.\ E.\ Eremenko and M.\ Yu.\ Lyubich,
Dynamical properties of some classes of entire functions,
{\rm Ann.\ Inst.\ Fourier} 42 (1992), 989--1020.
\bibitem{FletcherNicks}
A.\ Fletcher and D.\ A.\ Nicks,
Quasiregular dynamics on the $n$-sphere,
to appear in
{\rm Ergodic Theory Dy\-nam.\ Systems},
arXiv: 0909.0217v1.
\bibitem{Iwaniec01}
T.\ Iwaniec and G.\ Martin,
Geometric Function Theory and Non-linear Analysis,
Oxford Mathematical Monographs,
Oxford University Press, New York, 2001.
\bibitem{Karp1}
B. Karpi\'nska,
Area and Hausdorff dimension of the set of accessible points of
the Julia sets of $\lambda e\sp z$ and $\lambda\sin z$,
Fund. Math.  159  (1999), 269--287.
\bibitem{K} B. Karpi\'nska, Hausdorff dimension of the hairs without
endpoints for $\lambda\exp(z)$, C.~R. Acad. Sci. Paris,
Ser. I Math. 328 (1999), 1039--1044.
\bibitem{Kelley}
J.\ L.\ Kelley, 
General Topology, Van Nostrand, Toronto, New York, London, 1955.
\bibitem{Mc} C. McMullen, Area and Hausdorff dimension of Julia sets
of entire functions, Trans. Amer. Math. Soc. 300 (1987), 329--342.
\bibitem{N} S. Nadler, Continuum theory.
An introduction. Marcel Dekker, NY
1992. 
\bibitem{R3S} G. Rottenfu{\ss}er, J. R\"uckert, L. Rempe
and D. Schleicher, Dynamic rays of bounded-type
entire functions, to appear in Ann.\ of Math.,
arXiv: 0704.3213.
\bibitem{Rick} S. Rickman, Quasiregular mappings,
Springer, Berlin, Heidelberg, New York, 1993.
\bibitem{RS} G. Rottenfu{\ss}er and D. Schleicher,
Escaping points of the cosine family, ``Transcendental Dynamics
and Complex Analysis'', London Math. Soc. Lect. 
Notes Ser. 348, Cambridge University Press, Cambridge 2008, 396--424.
\bibitem{S} D. Schleicher, The dynamical fine structure of
iterated cosine maps and a dimension paradox, Duke
Math. J. 136 (2007), 343--356.
\bibitem{S1} D. Schleicher,
Hausdorff dimension, its properties, and its surprises,
Amer. Math. Monthly 114 (2007), 509--528. 
\bibitem{Schubert07}
H.\ Schubert, \"Uber die Hausdorff-Dimension der Juliamenge von
Funktionen endlicher Ordnung. Dissertation, University of Kiel,
2007.
\bibitem{SunYang00}
Daochun Sun and Lo Yang,
Quasirational dynamic system,
Chinese Science Bull. 45 (2000), 1277--1279.
\bibitem{Z} V. Zorich, A theorem of M. A. Lavrent'ev on quasiconformal space
maps, Math. USSR Sbornik 3 (1967), 389--403. (Translated
from Russian, Mat. Sb. 116 (1967), 417--433).
\end{thebibliography}
\end{document}